%% file: main.tex
\documentclass[10pt,reqno]{amsart}
\usepackage[T1]{fontenc}
\usepackage[utf8]{inputenc}
\usepackage{lmodern}
\usepackage{amsmath,amssymb,mathtools,xcolor}
\usepackage[expansion=false]{microtype}
\usepackage[margin=1.5in]{geometry}
\usepackage{hyperref}

\numberwithin{equation}{section}
\newtheorem{theorem}{Theorem}[section]
\newtheorem{lemma}[theorem]{Lemma}
\newtheorem{proposition}[theorem]{Proposition}
\theoremstyle{remark}
\newtheorem{remark}[theorem]{Remark}

\DeclareMathOperator{\diam}{diam}
\DeclareMathOperator{\Int}{int}
\newcommand{\R}{\mathbb R}
\newcommand{\Acoef}{\mathcal A}
\newcommand{\Bcoef}{\mathcal B}
\newcommand{\Gflux}{\mathcal G}
\newcommand{\eps}{\varepsilon}
\newcommand{\abslt}[1]{\left|#1\right|}
\newcommand{\norm}[1]{\lVert#1\rVert}

\providecommand{\eat}[1]{}

\allowdisplaybreaks[1]

\title[Enclosing rectangles and Neumann eigenvalues]{Minimum-Area Enclosing Rectangles and the First Neumann Eigenvalue of Planar Convex Domains}

\author[Xiaolong Li]{Xiaolong Li}
\address{Department of Mathematics and Statistics, Auburn University, Auburn, AL, 36849, USA}
\email{xil0005@auburn.edu}
\thanks{The first author was supported in part by NSF-DMS \#2553660 and a start-up grant at Auburn University.}

\author{Jin Sun}
\address{School of Mathematical Sciences, Fudan University, Shanghai, 200433, China}
\email{jsun22@m.fudan.edu.cn}

\author{Kui Wang}
\address{School of Mathematical Sciences, Soochow University, Suzhou, 215006, China}
\email{kuiwang@suda.edu.cn}

\date{}
\subjclass[2020]{35P15, 52A10}
\keywords{Neumann eigenvalue, enclosing rectangle, flux inequalities}

\begin{document}

\begin{abstract}
Let $\Omega$ be a bounded convex domain in the plane and let $R$ be a
minimum-area rectangle containing $\overline\Omega$. We prove that if the
ratio of the shorter side of $R$ to its longer side is at most $\rho \approx 0.443485$,
then the first nonzero Neumann eigenvalue of $\Omega$ is bounded from below by that of $R$. Moreover, equality holds
if and only if $\Omega$ coincides with the interior of $R$. The proof is based on a
flux identity for the means of a test function over vertical sections of
$\Omega$, combined with weighted estimates involving the distributional second
derivatives of the boundary functions. Finally, we show that the
threshold $\rho\approx0.443485$ is nearly optimal by constructing, for every
ratio in $(0.457994,1/2)$, trapezoids that are arbitrarily close to their
minimum-area enclosing rectangle and for which the inequality fails.
\end{abstract}
\maketitle

\section{Introduction}

Let $\Omega\subset\R^2$ be a bounded open convex domain. We consider the
Neumann eigenvalue problem
\begin{equation}\label{eq:neumann}
\begin{cases}
-\Delta u=\mu u & \text{in }\Omega,\\[2pt]
\partial_{\nu}u=0 & \text{on }\partial\Omega,
\end{cases}
\end{equation}
understood in the weak sense, where $\nu$ denotes the outward unit normal
to $\partial\Omega$. The first nonzero eigenvalue of \eqref{eq:neumann}
is characterized by
\begin{equation}\label{eq:variational}
\mu_1(\Omega)=
\inf_{u\in H^1(\Omega)\setminus\{0\}}\left\{
\frac{\int_\Omega|\nabla u|^2}{\int_\Omega u^2}: \int_\Omega u(x)\, dx=0\right\},
\end{equation}
so that $1/\mu_1(\Omega)$ is the optimal constant in the Poincar\'e
inequality for functions of mean zero on $\Omega$. Throughout the paper,
all functions are real-valued.

Estimating $\mu_1(\Omega)$ in terms of geometric quantities of $\Omega$,
such as its area and diameter, is a classical problem. The
Szeg\H{o}--Weinberger inequality \cite{Szego54,Weinberger56} asserts that
among planar domains of prescribed area, the disk maximizes $\mu_1$. For
convex domains, Payne and Weinberger \cite{PW60} proved the sharp lower
bound
\[
\mu_1(\Omega)\ge\frac{\pi^2}{\diam(\Omega)^2},
\]
where the constant is approached by rectangles whose shorter sides tend
to zero. Quantitative refinements in terms of the width have recently
been obtained by Amato, Bucur and Fragal\`a \cite{ABF24} and by Wang and
Xu \cite{WX24}; see also \cite{ABF26} for nonlinear extensions. For various extensions to curved spaces, see \cite{LY80}, \cite{ZY84}, \cite{Kroger99}, \cite{BQ00}, \cite{AC13}, \cite{AN12}, \cite{Ni2013}, \cite{LW21JGA, LW21MRL, LW21TAMS},\cite{ZW2017}, just to mention a few. 

For a rectangle $R$ with side lengths $L\ge W>0$, it is well known that
$\mu_1(\Int R)=\pi^2/L^2$. This suggests looking for geometric conditions
under which the diameter in the Payne--Weinberger inequality can be
replaced by the longer side of a rectangle containing $\Omega$. Inclusion
alone is not enough, since Neumann eigenvalues are not monotone with
respect to inclusion, even among convex domains. In this direction,
Freitas and Kennedy \cite{FK25} proved that for bounded convex planar
domains $\Omega_1\subset\Omega_2$,
\[
\mu_1(\Omega_1)\ge
\frac{\pi^2}{4j_{0,1}^2}\mu_1(\Omega_2),
\]
where $j_{0,1}$ is the first positive zero of the Bessel function $J_0$
and the factor is optimal for general convex inclusions. A related
obstruction comes from the work of Cavallina, Funano, Henrot, Lemenant,
Lucardesi and Sakaguchi \cite{CFHLLS25}, who showed that for
a fixed bounded open convex planar domain $\Omega_0$, the infimum of $\mu_1$ over
convex subdomains of $\Omega_0$ equals $\pi^2/\diam(\Omega_0)^2$. When $\Omega_0$ is a
rectangle with sides $L$ and $W$, thin rectangles approaching a diagonal
of $\Omega_0$ therefore have eigenvalues tending to $\pi^2/(L^2+W^2)$, which is
smaller than $\pi^2/L^2$.

In view of these examples, we impose a condition on the enclosing
rectangle itself. By a minimum-area enclosing rectangle of $\Omega$ we
mean a rectangle, of arbitrary orientation, which contains
$\overline\Omega$ and has the least area among all such rectangles. Existence follows from the continuity of the two orthogonal widths as
functions of the orientation and compactness of the set of orientations.
Such a rectangle need not be unique, and we simply fix one of them. Since the function $f(s)=s\cot s$ is strictly decreasing from $1$ to $0$ on $(0,\pi/2)$, there exists a unique $\rho\in(0,1/2)$ satisfying
\begin{equation}\label{eq:rho}
\pi\rho\cot(\pi\rho)=\frac{1}{4}.
\end{equation}
The main result of this paper states the following.

\begin{theorem}\label{thm:rho}
Let $\Omega$ be a bounded open convex set in the plane, and let $R$ be
a minimum-area enclosing rectangle of $\Omega$, with side
lengths $L\ge W>0$. If $W/L\le\rho$, then
\[
\mu_1(\Omega)\ge\frac{\pi^2}{L^2}=\mu_1(\Int R).
\]
Equality holds if and only if $\Omega=\Int R$.
\end{theorem}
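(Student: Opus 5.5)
We place $R=[0,L]\times[0,W]$ and write $\Omega=\{(x,y):0<x<L,\ g(x)<y<h(x)\}$ with $g$ convex, $h$ concave and $\ell(x)=h(x)-g(x)$. Minimality of the area forces $\overline\Omega$ to touch all four sides of $R$. It also gives the classical geometric facts: $\overline\Omega$ touches the two vertical sides, and at least one horizontal side of $R$ contains a side of the convex hull, or else the contact points satisfy the balancing condition that makes $d(\text{area})/d\theta=0$. We take an eigenfunction $u$ for $\mu=\mu_1(\Omega)$ and argue by contradiction, assuming $\mu<\pi^2/L^2$.

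\textbf{Step 1 (reduction to the section means).} Let $m(x)=\ell(x)^{-1}\int_{g(x)}^{h(x)}u(x,y)\,dy$ be the vertical section mean. Integrating the equation $-\Delta u=\mu u$ over the section $\{x\}\times(g(x),h(x))$ and using $\partial_\nu u=0$ gives a flux identity. With $F(x)=\int_g^h u_x\,dy$ one has $-F'(x)=\mu\,\ell(x)m(x)$ and $F(0)=F(L)=0$. Since $(\ell m)'=F+h'u(x,h)-g'u(x,g)$, we can write
\[
(\ell m')'+\mu\,\ell m=-\Gflux(x),
\]
where $\Gflux$ collects boundary terms: in the distributional sense these involve $h'',g''$ times oscillations of $u$ across the section, namely $u(x,h)-m$ and $u(x,g)-m$. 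The goal is to test this against $m$, or to compare it with the one-dimensional Sturm--Liouville problem on $(0,L)$, which has weight $\ell$ and Neumann data. For $\ell\equiv W$ that problem has first eigenvalue $\pi^2/L^2$.

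\textbf{Step 2 (splitting of the energy).} We decompose $u=m(x)+v$, where $v$ has zero mean on every section. Then
\[
\int|\nabla u|^2\ge\int_0^L\ell m'^2\,dx+\int_\Omega v_y^2+(\text{cross terms from }v_x),
\]
and
\[
\int_\Omega v_y^2\ge \frac{\pi^2}{W^2}\int_\Omega v^2,
\]
by the one-dimensional Poincaré inequality on sections of length at most $W$. The cross terms have the form $\int m'\,\partial_x(\cdot)$ and, after integration by parts, become weighted integrals against $g'',h''$. We denote these coefficient expressions by $\Acoef,\Bcoef$. We then bound them by $\int\ell m'^2$ and $\int v_y^2$ using Cauchy--Schwarz with weights. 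The total variation of $g'$ and $h'$ is controlled by $W$ and the slopes, which minimality bounds.

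\textbf{Step 3 (the hardest part).} This is the weighted estimate. We must show that the loss coming from $\ell$ being nonconstant, together with the cross terms, is absorbed by the gain $\pi^2/W^2-\pi^2/L^2$. The constant $\tfrac14=\pi\rho\cot(\pi\rho)$ should appear as follows. The worst profile is the trapezoid-like $\ell$. The comparison function $\cos(\pi x/L)$, set against the transverse mode, produces a term of the form $s\cot s$ with $s=\pi W/L$, coming from the ODE solution $\cosh$ or $\cot$ on the sections. We would first try the test-function approach of Payne--Weinberger type. We take $\phi$ to be the 1D eigenfunction for the weight $\ell$, which is log-concave, and use that $\mu_1$ of the weighted problem is at least $\pi^2/L^2$ by the convexity of $\ell$'s support, in the spirit of Payne--Weinberger. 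This reduces the task to showing that the remainder satisfies $\Gflux\le0$ against $m$, which holds exactly when $W/L\le\rho$.

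\textbf{Step 4 (equality).} In the equality case every inequality is tight. In particular $\int v_y^2=(\pi^2/W^2)\int v^2$ with the gain strict forces $v\equiv0$. Also $g''=h''=0$ as measures wherever $m'\ne0$, and $\ell$ must be constant. Hence $\Omega$ is the rectangle $\Int R$.
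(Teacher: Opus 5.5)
There is a genuine gap in Step~3, and it is not only missing detail: the reduction you propose cannot work. Testing your ODE against $m$, the step ``$\Gflux\le0$ against $m$'' amounts to proving
$\mu\int_0^L\ell m^2\ge\int_0^L\ell m'^2$.

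This inequality is false except for the rectangle:
\begin{itemize}
\item Functions depending on $x$ alone, $u=\varphi(x)$, are admissible test functions for $\mu$. Hence $\mu\le\mu_\ell$, where $\mu_\ell$ is the first nonzero eigenvalue of $-(\ell\varphi')'=\mu_\ell\,\ell\varphi$ on $(0,L)$ with natural boundary conditions.
\item Since $\int_0^L\ell m=\int_\Omega u=0$, we get $\int\ell m'^2\ge\mu_\ell\int\ell m^2\ge\mu\int\ell m^2$.
\item So your inequality can hold only with equality. For $m\not\equiv0$, that makes $m(x)$ itself a first eigenfunction of $\Omega$. The Neumann condition on the top and bottom then forces $g'=h'=0$, so $\Omega$ is the rectangle.
\end{itemize}
The weighted one-dimensional problem bounds $\mu$ from \emph{above}. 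The real content of the theorem is a quantitative trade-off. One side is how far $\mu$ drops below $\mu_\ell$, because of transverse and shear effects. The other side is how far $\mu_\ell$ exceeds $\pi^2/L^2$. Your proposal estimates neither.

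The paper makes this trade-off in dual variables, a section-level version of the Payne--Weinberger derivative idea you allude to. Normalize $L=1$, $W=r$ and keep your notation.
\begin{itemize}
\item It uses the sectional flux $F=-\int_0^x\ell m$ and $z=F/\sqrt\ell\in H_0^1(0,1)$. These satisfy $\int\ell m^2=\pi^2\int z^2+E+V+\frac34\int(\ell'/\ell)^2z^2$.
\item Here $E\ge0$ measures how far $z/\sin\pi x$ is from constant, and $V=\frac12\int\frac{z^2}{\ell}\,d(g''-h'')\ge0$. So the profile's curvature and its $(\ell'/\ell)^2$ term now enter with a favorable sign.
\item An exact dual problem on each section (the functions $\psi_o,\psi_e$) gives $M^2\le(Z+T)\bigl(\int|\nabla u|^2-\pi^2Q\bigr)$, with $T=\int z^2\bigl(\Acoef(\ell)c'^2+\Bcoef(\ell)\ell'^2\bigr)$.
\item The threshold comes from $c'^2=\ell'^2/4+g'h'$ and $\Acoef(t)/4+\Bcoef(t)=(1-\pi t\cot\pi t)/(\pi^2t^2)$. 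Together they turn the $\frac34$-term minus the height part of $\pi^2T$ into $\int(\pi\ell\cot(\pi\ell)-\frac14)(\ell'/\ell)^2z^2$, which is $\ge0$ when $\ell\le\rho$.
\end{itemize}
This is only a sufficient condition, not the exact threshold of a trapezoid worst case. The paper's trapezoid counterexamples begin at $r_*\approx0.458>\rho$, and the sharp threshold is open. So your claim that the condition ``holds exactly when $W/L\le\rho$'' is also not right.

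Even granting the above, the shear term $-\pi^2\int\Acoef(\ell)\,g'h'\,z^2$ remains. This is where minimality must enter quantitatively; without it the conclusion fails, for example for thin domains along a diagonal of $R$. Your proposal only names a ``balancing condition'' and never extracts an inequality from it. The claim that minimality bounds the slopes is false. For an ellipse inscribed in $R$, the axis-parallel rectangle is area-minimizing, yet $g'$ and $h'$ blow up at the left and right contacts. The paper instead uses $|g'|+|h'|\le2r/\min\{x,1-x\}$ together with Hardy's inequality.

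The paper's use of minimality goes as follows:
\begin{itemize}
\item It differentiates the area of the enclosing rectangle rotated by a small angle, from the right at angle $0$.
\item This shows that the bottom and top contact intervals satisfy $\beta-\alpha\le r^2$, and that $g'h'\le0$ outside $[\alpha,\beta]$.
\item Using the product of the two gaps to the horizontal sides, it proves $\int(g'h')_+z^2\le\frac5{14}r^2(E+V)$.
\item This closes the argument because $\kappa_\rho=\frac{5\pi\rho}7\tan\frac{\pi\rho}2<1$.
\end{itemize}

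Your Step~4 inherits these gaps, and the claim that ``the gain strict forces $v\equiv0$'' is unsupported. In the paper, equality forces $E=V=0$ and the $\frac14$-term to vanish. Then $z=\lambda\sin\pi x$ with $\lambda\ne0$, which makes $g$ and $h$ affine with $\ell$ constant. The contact lemma then gives $g\equiv0$ and $h\equiv r$.
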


\begin{remark}
Numerically, $\rho\approx0.443485$, so Theorem~\ref{thm:rho} applies
whenever the minimum-area enclosing rectangle is sufficiently elongated.
The numerical value is not used anywhere in the proof. We only use the
defining equation \eqref{eq:rho} and the elementary bounds in
Lemma~\ref{lem:rho-constants}.
\end{remark}

After a rigid motion and scaling, we may assume that
\[
\overline\Omega\subset R_r:=[0,1]\times[0,r],
\qquad r=\frac WL\le\rho,
\]
where $R_r$ is a minimum-area enclosing rectangle of $\Omega$. Thus, proving Theorem
\ref{thm:rho} amounts to proving
\begin{equation}\label{eq:comparison}
\mu_1(\Omega)\ge\pi^2
\end{equation}
and characterizing the case of equality.

The threshold $\rho$ in Theorem~\ref{thm:rho} is close to optimal. In
Appendix~\ref{sec:near-optimality}, we show that there is a constant
$r_*\approx0.457993$ with the following property. For
$r\in(r_*,1/2)$, consider the trapezoids
\[
\Omega_{r,s}
=\{(x,y):0<x<1,\ 0<y<r(1-sx)\},
\qquad 0<s<1.
\]
The rectangle $R_r$ is a minimum-area enclosing rectangle of each
$\Omega_{r,s}$, and $\overline{\Omega_{r,s}}$ converges to $R_r$ in the
Hausdorff distance as $s\to0$. Nevertheless,
\[
\mu_1(\Omega_{r,s})<\pi^2
\]
for all sufficiently small $s>0$. Hence, the optimal threshold in
Theorem~\ref{thm:rho} lies in $[\rho,r_*]$, an interval of length less
than $0.015$.  It remains an interesting problem to determine the optimal threshold.

This paper is organized as follows. In Section~\ref{sec:prelim}, we
prove the contact separation estimate, collect some one-dimensional
inequalities, and establish the bounds for $\rho$.
Section~\ref{sec:flux} contains the flux identity, the
transverse estimate and the shear estimate. We prove
Theorem~\ref{thm:rho}, including the characterization of the equality
case, in Section~\ref{sec:proof}. The appendix gives a direct variational proof of the trapezoid
counterexamples.

\bigskip

\textbf{AI disclosure.}  The core mathematical results and proofs in this paper are the original contributions of the authors. All essential strategies regarding Theorem \ref{thm:rho} were formulated by the authors. OpenAI’s ChatGPT was used solely as an assistive tool to identify the trapezoids example in Appendix A; all arguments and calculations have been rigorously verified by the authors.

\section{Preliminaries}\label{sec:prelim}

\subsection{Contact separation}

Let $0<r<1$, and let $\Omega\subset(0,1)\times(0,r)$ be an open convex
set whose projection onto the $x$-axis is $(0,1)$. Since $\Omega$ is convex, we may write
\begin{equation*}
\Omega=\{(x,y):0<x<1,\ a(x)<y<b(x)\},
\end{equation*}
and
\begin{equation}\label{eq:profiles}
 h(x)=b(x)-a(x),\quad\text{and}\quad c(x)=\frac{a(x)+b(x)}{2},
\end{equation}
where $a(x)$ is convex, $b(x)$ is concave, and $h(x)$ is positive and concave on
$(0,1)$. These functions extend continuously to $[0,1]$ and satisfy
$0\le a\le b\le r$. Note that the heights at the endpoints may
vanish and that the boundary slopes may be unbounded near $x=0$ and
$x=1$. We denote by
\[
\nu_a=a'',\qquad \nu_b=-b'',\qquad \nu_a+\nu_b=-h''
\]
the nonnegative Radon measures on $(0,1)$.

If $R_r$ is a minimum-area enclosing rectangle of $\Omega$, then each side of
$R_r$ intersects $\overline\Omega$. Consequently, the bottom and top contact sets
\[
I_-(\Omega)=\{x\in[0,1]:(x,0)\in \overline\Omega\},
\qquad
I_+(\Omega)=\{x\in[0,1]:(x,r)\in \overline\Omega\}
\]
are nonempty closed intervals, possibly reduced to points.

In the proof of Lemma~\ref{lem:contacts} we need an elementary fact for
differentiating extrema. Suppose that $f_\theta$ and $g$ are continuous
functions on $\overline\Omega$ such that $f_\theta=f_0+\theta g+o(\theta)$ uniformly on
$\overline\Omega$ as $\theta\rightarrow0$. Then
\[
\left.\frac{d}{d\theta}\right|_{0+}\max_{\overline\Omega} f_\theta
=\max_{\{p\in \overline\Omega:f_0(p)=\max_{\overline\Omega}f_0\}}g(p).
\]

\begin{lemma}\label{lem:contacts}
Let $R_r$ be a minimum-area enclosing rectangle of $\Omega$.
If $I_-(\Omega)$ and $I_+(\Omega)$ are disjoint, replace $x$ by
$1-x$ if necessary so that
\[
\alpha:=\max I_-(\Omega)<\beta:=\min I_+(\Omega).
\]
Then
\[
\beta-\alpha\le r^2.
\]
Moreover, $a'b'\le0$ almost everywhere outside
$[\alpha,\beta]$. If $I_-(\Omega)\cap I_+(\Omega)\neq \emptyset$, then $a'b'\le0$
almost everywhere on $(0,1)$.
\end{lemma}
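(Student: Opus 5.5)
The plan is to read off $\beta-\alpha\le r^2$ from the first-order optimality condition of the area under a small rotation of the rectangle. The sign statements will then follow from monotonicity of the convex function $a$ and the concave function $b$ around their extremal sets.

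\emph{Step 1 (rotation).} For small $\theta>0$ consider the rectangle with sides in directions $(\cos\theta,\sin\theta)$ and $(-\sin\theta,\cos\theta)$ that encloses $\overline\Omega$ as tightly as possible. Its side lengths are
\[
w_1(\theta)=\max_{\overline\Omega}(x\cos\theta+y\sin\theta)-\min_{\overline\Omega}(x\cos\theta+y\sin\theta),
\]
\[
w_2(\theta)=\max_{\overline\Omega}(-x\sin\theta+y\cos\theta)-\min_{\overline\Omega}(-x\sin\theta+y\cos\theta).
\]
We have $w_1(0)=1$ and $w_2(0)=r$, and minimality gives $\frac{d}{d\theta}\big|_{0+}(w_1w_2)\ge0$. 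We compute this derivative with the stated fact on differentiating extrema, applied to each max (and to each min, written as $-\max$ of the negative).

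For $w_1$, the maximizers of $x$ are the points of $\overline\Omega$ on $\{x=1\}$, namely $\{1\}\times[a(1),b(1)]$, and the perturbation is $g=y$. The minimizers are $\{0\}\times[a(0),b(0)]$. Hence $w_1'(0+)=b(1)-a(0)\le r$.

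For $w_2$, the maximizers of $y$ are $I_+(\Omega)\times\{r\}$ with $g=-x$, so the max contributes $-\min I_+=-\beta$. The minimizers are $I_-(\Omega)\times\{0\}$, contributing $-\min(-x)=\max I_-=\alpha$. Thus $w_2'(0+)=\alpha-\beta$.

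Therefore
\[
0\le r\bigl(b(1)-a(0)\bigr)+(\alpha-\beta)\le r^2-(\beta-\alpha),
\]
which is the claimed bound. The reflection $x\mapsto 1-x$ fixed the orientation, so only the one-sided derivative with $\theta\to0^+$ is needed.

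\emph{Step 2 (signs).} The function $a$ is convex and $\ge0$, with $a=0$ exactly on $I_-(\Omega)$. Hence $a'\le0$ a.e.\ on $(0,\alpha)$ and $a'\ge0$ a.e.\ on $(\alpha,1)$. Similarly $b$ is concave and $\le r$, with $b=r$ on $I_+(\Omega)$. Hence $b'\ge0$ a.e.\ on $(0,\beta)$ and $b'\le0$ a.e.\ on $(\beta,1)$. When $\alpha<\beta$, on $(0,\alpha)$ we get $a'\le0\le b'$, and on $(\beta,1)$ we get $b'\le0\le a'$; so $a'b'\le0$ a.e.\ outside $[\alpha,\beta]$. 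If instead some $x_0\in I_-(\Omega)\cap I_+(\Omega)$ exists, the same monotonicity applied about $x_0$ gives $a'\le0\le b'$ on $(0,x_0)$ and $a'\ge0\ge b'$ on $(x_0,1)$, hence $a'b'\le0$ a.e.\ on $(0,1)$.

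\emph{Main obstacle.} The only delicate point is the rigorous use of the extremum-differentiation fact. One must check that $f_\theta=f_0+\theta g+o(\theta)$ holds uniformly on $\overline\Omega$, which is clear since $\cos\theta=1+O(\theta^2)$ and $\sin\theta=\theta+O(\theta^3)$. One must also identify the maximizer sets correctly; vertical boundary segments at $x=0,1$ may degenerate to points, but the formulas are unchanged. Everything else is elementary monotonicity of convex functions, with derivatives taken a.e.\ since $a$ and $b$ are locally Lipschitz on $(0,1)$.
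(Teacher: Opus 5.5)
Your proposal is correct and follows essentially the same route as the paper: you take the same one-sided derivative of the product of the two orthogonal widths, with $W_X'(0+)=b(1)-a(0)\le r$ and $W_Y'(0+)=\alpha-\beta$, to get $\beta-\alpha\le r^2$. The sign conditions then come, as in the paper, from the monotonicity of the convex $a$ and the concave $b$ on either side of their extremal points $\alpha$ and $\beta$ (or of a common contact point $x_0$).
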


\begin{proof}
Suppose $I_-(\Omega)\cap I_+(\Omega) = \emptyset$ and set $d=\beta-\alpha$. For $\theta\in\R$, introduce the
rotated coordinates
\[
X_\theta(x,y)=x\cos\theta+y\sin\theta,
\qquad Y_\theta(x,y)=-x\sin\theta+y\cos\theta,
\]
and define the widths of $\Omega$ in these two orthogonal directions by
\[
\begin{aligned}
W_X(\theta)&:=\max_{(x,y)\in \overline\Omega}X_\theta(x,y)
             -\min_{(x,y)\in \overline\Omega}X_\theta(x,y),\\
W_Y(\theta)&:=\max_{(x,y)\in \overline\Omega}Y_\theta(x,y)
             -\min_{(x,y)\in \overline\Omega}Y_\theta(x,y).
\end{aligned}
\]
Thus, $W_X(0)=1$ and $W_Y(0)=r$. Their product
$\mathcal W(\theta):=W_X(\theta)W_Y(\theta)$ is the area of the smallest
enclosing rectangle whose sides are parallel to the rotated coordinate
axes. The minimum-area condition gives that for every $\theta\in\R$,
\[
\mathcal W(\theta)\ge\mathcal W(0)=r.
\]
We now compute the right derivatives at $\theta=0$. Note that uniformly on $\overline\Omega$,
\[
X_\theta=x+\theta y+O(\theta^2),
\qquad Y_\theta=y-\theta x+O(\theta^2).
\]
The maximizers of $Y_0$ are the top contacts and its
minimizers are the bottom contacts. Hence,
\[
\begin{aligned}
\left.\frac{d}{d\theta}\right|_{0+}\max_{\overline\Omega}Y_\theta
&=-\min I_+(\Omega)=-\beta,\\
\left.\frac{d}{d\theta}\right|_{0+}\min_{\overline\Omega}Y_\theta
&=-\max I_-(\Omega)=-\alpha.
\end{aligned}
\]
Therefore, $W_Y'(0+)=\alpha-\beta=-d$. Similarly, we have
\[
W_X'(0+)=\max\{y:(1,y)\in \overline\Omega\}-\min\{y:(0,y)\in \overline\Omega\}\le r.
\]
Differentiating the area product yields
\begin{align*}
0\le\mathcal W'(0+)
&=W_X'(0+)W_Y(0)+W_X(0)W_Y'(0+)\\
&=rW_X'(0+)-d\le r^2-d.
\end{align*}
Thus, we have $\beta-\alpha=d\le r^2$.

It remains to prove the sign conditions. Since $a$ attains its minimum at $\alpha$ and $b$ attains its maximum at $\beta$,
convexity forces $a'\le0, b'\ge0$ on $[0,\alpha]$, 
and $a'\ge0, b'\le0$ on $[\beta,1]$. Thus $a'b'\le0$ almost everywhere outside $[\alpha,\beta]$. 
If $I_-(\Omega)\cap I_+(\Omega) \neq \emptyset$, we choose
$x_0\in I_-(\Omega)\cap I_+(\Omega)$. Then the same sign conditions hold to the
left and to the right of $x_0$ respectively, and give $a'b'\le0$
almost everywhere on $(0,1)$. This argument also covers the cases
$x_0=0$ and $x_0=1$.
\end{proof}

\subsection{One-dimensional inequalities}

Let  $\phi(x)=\sin(\pi x)$. For any  $z\in H_0^1(0,1)$, a direct calculation gives
\[
|z'|^2-\pi^2z^2
=\phi^2\left|\left(\frac z\phi\right)'\right|^2
+\left(\frac{\phi'}\phi z^2\right)',
\]
and integrating above identity over $(0,1)$ yields 
\begin{equation}\label{eq:ground-state}
\int_0^1|z'|^2\, dx-\pi^2\int_0^1z^2\, dx
=\int_0^1\phi^2\left|\left(\frac z\phi\right)'\right|^2\, dx,
\end{equation}
We will  use  the above identity frequently in the following, and 
 also use the following Poincar\'e inequality for functions of mean zero. If $I$ is an interval of length $t$ and $v\in H^1(I)$ has mean zero on
$I$, then
\begin{equation}\label{eq:slice-Poincare}
\int_Iv^2\le\frac{t^2}{\pi^2}\int_I|v'|^2.
\end{equation}

For $z\in C_c^\infty(0,1)$, integration by parts and the Cauchy--Schwarz
inequality give
\[
\int_0^1\frac{z^2}{x^2}\,dx
=2\int_0^1\frac{zz'}x\,dx
\le2\left(\int_0^1\frac{z^2}{x^2}\,dx\right)^{1/2}
\left(\int_0^1|z'|^2\,dx\right)^{1/2}.
\]
By density, the resulting bound extends to $H_0^1(0,1)$. Adding its
reflection about $x=1/2$ gives the following Hardy's inequality
\begin{equation}\label{eq:Hardy}
\int_0^1\frac{z^2}{\min\{x,1-x\}^2}\,dx
\le8\int_0^1|z'|^2\,dx,
\qquad z\in H_0^1(0,1).
\end{equation}

\subsection{Bounds for the constant \texorpdfstring{$\rho$}{rho}}

\begin{lemma}\label{lem:rho-constants}
The constant $\rho$ defined by \eqref{eq:rho} satisfies
\begin{equation*}
	\rho^2<\frac{1}{5},
\end{equation*}
and 
\begin{align*}
     \kappa_\rho:=\frac{5\pi\rho}{7}\,\tan\frac{\pi\rho}{2}<1.
\end{align*}
\end{lemma}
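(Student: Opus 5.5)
The plan is to exploit the monotonicity of $f(s)=s\cot s$ on $(0,\pi/2)$. Since $f$ is strictly decreasing and $f(\pi\rho)=1/4$, for any $\rho_0\in(0,1/2)$ we have $\rho<\rho_0$ if and only if $\pi\rho_0\cot(\pi\rho_0)<1/4$. So both claims reduce to comparing $\rho$ with explicit test values and checking elementary trigonometric inequalities, with no numerics for $\rho$ beyond rigorous bounds for $\pi$, $\sin$, $\cos$ and $\tan$.

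For the first inequality, $\rho^2<1/5$, set $\rho_0=1/\sqrt5\approx0.4472$. We must show $s_0\cot s_0<1/4$ with $s_0=\pi/\sqrt5\approx1.405$, that is, $\tan s_0>4s_0\approx5.62$. Write $s_0=\pi/2-\delta$ with $\delta=\pi(1/2-1/\sqrt5)\approx0.1657$. The target becomes $\cot\delta>4s_0$. Since $\tan\delta\le\delta+\delta^3/2$ for small $\delta$ (or simply $\tan\delta<\delta/(1-\delta^2/2)$), it suffices that $1/(\delta(1+\delta^2/2))>4(\pi/2-\delta)$. I would verify this with $3.14<\pi<3.1416$ and $2.236<\sqrt5<2.2361$: the left side is about $5.96$ and the right side about $5.62$, which leaves a comfortable margin.

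For the second inequality, $\kappa_\rho<1$, the function $t\mapsto t\tan(\pi t/2)$ is increasing, so an upper bound $\rho<\rho_1$ gives $\kappa_\rho<\frac{5\pi\rho_1}{7}\tan\frac{\pi\rho_1}{2}$. The first step supplies $\rho_1=1/\sqrt5$, but then $\pi\rho_1/2\approx0.7025$, $\tan\approx0.847$, and $\frac{5\pi}{7\sqrt5}\approx1.0036$, so the product is about $0.850<1$. This bound therefore already suffices. I would check it rigorously via $\tan u<u/(1-u^2/3)$ for $u=\pi/(2\sqrt5)<0.7025$. Under these numerical values that gives $\tan u<0.84$, modulo care, and hence $\kappa_\rho<1.0036\cdot0.85<1$.

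The main obstacle is the first step, because the margin there is the tightest (about $0.34$ in $\cot\delta$ versus $4s_0$). It requires careful rational bounds on $\pi$, $\sqrt5$ and the tangent expansion so that the argument is fully rigorous without numerically evaluating $\rho$. The second inequality then follows from the first with room to spare.
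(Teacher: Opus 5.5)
Your reduction of both claims to explicit test values via the monotonicity of $s\cot s$ is sound, and the first step works. With $\delta=\pi(\tfrac12-\tfrac1{\sqrt5})\approx0.1658$, use the bound $\tan\delta<\delta/(1-\delta^2/2)$. It is true on $(0,\sqrt2)$, because $u\cos u-(1-u^2/2)\sin u$ vanishes at $0$ and has derivative $\frac{u^2}{2}\cos u>0$. It gives $\cot\delta>(1-\delta^2/2)/\delta\approx5.947>5.620\approx4\pi/\sqrt5$. With this bound the sufficient condition becomes $(1-\delta^2/2)/\delta>4(\pi/2-\delta)$ rather than the one you display, but the margin is essentially the same. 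Your reduction of $\kappa_\rho<1$ to $\frac{\sqrt5\,\pi}{7}\tan\frac{\pi}{2\sqrt5}<1$, using monotonicity of $t\tan(\pi t/2)$, is also correct, and the value $\approx0.850$ is right.

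The gap is the tool you propose for making the second step rigorous. The inequality $\tan u<u/(1-u^2/3)$ is false, and in fact the reverse holds on all of $(0,\pi/2)$. The function $\sin u\,(1-u^2/3)-u\cos u$ vanishes at $0$ and has derivative $\frac u3(\sin u-u\cos u)>0$. Equivalently, $u/(1-u^2/3)$ is a convergent of Lambert's continued fraction and $\tan u-u/(1-u^2/3)=u^5/45+O(u^7)$. At $u=\pi/(2\sqrt5)$ the bound gives about $0.8408$, while $\tan u\approx0.8465$. So your claim $\tan u<0.84$ is false, and it contradicts your own estimate $\tan\approx0.847$. The final inequality $1.0036\cdot0.85<1$ is true, but nothing in the proposal proves $\tan u<0.85$.

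The fix is immediate: use the bound from your first step, $\tan u<u/(1-u^2/2)$. This gives $\tan u<0.933$ and $\kappa_\rho<1.0036\cdot0.933<0.94$.

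For comparison, the paper needs no tangent estimate at a specific point here. With $\gamma=\pi\rho$, the relation $\tan\gamma=4\gamma$ and the half-angle formula give exactly $\kappa_\rho=\frac5{28}\bigl(\sqrt{1+16\gamma^2}-1\bigr)$. Then $\gamma^2<2$ yields $\kappa_\rho<\frac5{28}(\sqrt{33}-1)<1$. For $\rho^2<1/5$, the paper turns $\tan x\le x/(1-x^2)$ at $x=\pi/2-\gamma$ into a quadratic inequality for $\rho$, instead of testing the value $1/\sqrt5$.
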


\begin{proof}
Set $\gamma=\pi\rho$. Then $0<\gamma<\pi/2$ and
\[
\gamma\cot\gamma=\frac14.
\]
We first prove $\rho^2<1/5$. Since $f(s)=s\cot s$ is strictly decreasing on $(0,\pi/2)$ and
$f(\pi/3)=\pi/(3\sqrt{3})>1/4$, we obtain $\gamma>\pi/3$. 
For $0<x<1$, we have the elementary inequality 
\[
\tan x\le \frac{x}{1-x^2}.
\]
Let $x = \pi/2-\gamma<1$. Then
\[
\frac{1}{4\gamma}\le \frac{\pi/2-\gamma}{1-(\pi/2-\gamma)^2}.
\]
Therefore, we have
\[
3\gamma^2-\pi\gamma+1-\frac{\pi^2}{4}\le 0.
\]
Since $\rho=\gamma/\pi$ and $1/\pi^2>1/10$, we obtain
\[
3\rho^2-\rho-\frac{3}{20}<0,
\]
Thus,
\begin{equation}\label{eq:rho_upperbound}
\rho^2<\left(\frac{5+\sqrt{70}}{30}\right)^2
<\frac{1}{5}.
\end{equation}

By the half-angle formula and $\tan\gamma=4\gamma$, we have
\[
\tan\frac{\gamma}{2}
=\frac{\sqrt{1+16\gamma^2}-1}{4\gamma}.
\]
Thus,
\[
\frac{5\pi\rho}{7}\,\tan\frac{\pi\rho}{2}
=\frac{5\gamma}{7}\,\tan\frac{\gamma}{2}
=\frac{5}{28}\bigl(\sqrt{1+16\gamma^2}-1\bigr).
\]
Consequently,
\[
\kappa_\rho=\frac{5\pi\rho}{7}\,\tan\frac{\pi\rho}{2}
<\frac{5}{28}\left(\sqrt{33}-1\right)<1,
\]
where we used $\gamma^2=\pi^2\rho^2<2$.
\end{proof}

\section{Flux inequalities}\label{sec:flux}

Throughout this section we use the notation of Section~\ref{sec:prelim},
with $0<r\le\rho$. No smoothness of the boundary is assumed. The distributional
second derivatives of the boundary functions enter only through the measures
$\nu_a$ and $\nu_b$.

\subsection{The flux identity}

Fix $u\in H^1(\Omega)$ with $\int_\Omega u\, dx=0$. We decompose $u$ into its
sectional mean $m$ and a fluctuation $w$ which has mean zero on each
vertical section, that is,
\[
m(x)=\frac1{h(x)}\int_{a(x)}^{b(x)}u(x,y)\,dy,
\qquad w(x,y)=u(x,y)-m(x).
\]
where $h(x)$ is defined in \eqref{eq:profiles}.
For almost every $x$, $w\in H^1(a(x),b(x))$ and
satisfies $w_y=u_y$ on that section. 
Let $M$ and $Q$ denote the contributions of these two components to the
squared $L^2(\Omega)$ norm, namely
\[
M=\int_0^1hm^2,\qquad Q=\int_\Omega w^2.
\]
Since $w$ is orthogonal to constants on each section, we have
\[
\int_\Omega u^2=M+Q.
\]
We define the sectional flux $F$, its normalization $z$, and the squared
$L^2(0,1)$ norm $Z$ of $z$ by
\begin{equation}\label{eq:flux}
F(x)=-\int_0^xh(t)m(t)\,dt,
\qquad z(x)=\frac{F(x)}{\sqrt{h(x)}},
\qquad Z=\int_0^1z^2.
\end{equation}
Since $u$ has mean zero, $F(0)=F(1)=0$. The normalization by $\sqrt h$
is chosen so that the weight $h$ disappears from the leading derivative
term in the identity below. 
To measure how far $z$ is from being a multiple of $\phi(x)=\sin(\pi x)$, we put
\begin{equation}\label{eq:flux-positive-terms1}
E=\int_0^1\phi^2\left|\left(\frac z\phi\right)'\right|^2\, dx.
\end{equation}
We also put
\begin{equation}\label{eq:flux-positive-terms2}
V_a=\frac12\int_{(0,1)}\frac{z^2}{h}\,d\nu_a,
\qquad
V_b=\frac12\int_{(0,1)}\frac{z^2}{h}\,d\nu_b,
\quad V=V_a+V_b.
\end{equation}
where $V_a$ and $V_b$ are the weighted curvature
contributions of the lower and upper boundaries, respectively. The
curvature integrals are taken in the sense of measures on $(0,1)$, so
that integration by parts on compact subintervals remains valid when the
boundary functions are not smooth.

\begin{lemma}\label{lem:flux-positive}
The normalized flux $z$ defined in \eqref{eq:flux} belongs to $H_0^1(0,1)$. 
All the quantities in \eqref{eq:flux-positive-terms1} and \eqref{eq:flux-positive-terms2} are finite, and
\begin{equation}\label{eq:positive-identity}
M=\pi^2Z+E+V+\frac34\int_0^1\left(\frac{h'}h\right)^2z^2.
\end{equation}
\end{lemma}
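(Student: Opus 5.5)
We would prove \eqref{eq:positive-identity} by rewriting $M$ in terms of the flux $F$ and expanding it in terms of $z$. Since $F'=-hm$ almost everywhere, we have $m=-F'/h$ and
\[
M=\int_0^1 hm^2=\int_0^1\frac{(F')^2}{h}.
\]
On any compact subinterval $[\delta,1-\delta]$ the concave function $h$ is bounded below by a positive constant and is Lipschitz, and $h'$ has bounded variation with $-h''=\nu_a+\nu_b$. Hence $z=F/\sqrt h$ is locally $H^1$ there, and
\[
F'=\sqrt h\,z'+\frac{h'}{2\sqrt h}\,z .
\]
This gives, pointwise almost everywhere,
\[
\frac{(F')^2}{h}=|z'|^2+\frac{h'}{2h}(z^2)'+\frac14\Big(\frac{h'}h\Big)^2z^2 .
\]
Integrate the middle term by parts on $[\delta,1-\delta]$, in the sense of measures. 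Since $(h'/h)'=h''/h-(h'/h)^2$ as measures, this yields
\[
\int_\delta^{1-\delta}\frac{(F')^2}{h}
=\int_\delta^{1-\delta}|z'|^2+\frac34\int_\delta^{1-\delta}\Big(\frac{h'}h\Big)^2z^2+\frac12\int_{[\delta,1-\delta]}\frac{z^2}{h}\,d(\nu_a+\nu_b)+\Big[\frac{h'}{2h}z^2\Big]_\delta^{1-\delta}.
\]
To avoid atoms of the measures at the endpoints, we would take $\delta$ outside the countable set of atoms.

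The main obstacle is the passage $\delta\to0$: we must show that the boundary term vanishes and that $z\in H_0^1(0,1)$, even though $h$ may vanish at the endpoints and $h'$ may blow up there. Near $x=0$ we use Cauchy--Schwarz:
\[
F(x)^2\le\Big(\int_0^xh\Big)\Big(\int_0^xhm^2\Big)=o(1)\int_0^xh .
\]
If $h(0)=0$, concavity makes $h$ nondecreasing near $0$, so $\int_0^xh\le xh(x)$. If $h(0)>0$, then $\int_0^x h\le Cx$ and $h$ is bounded below. In either case $z(x)^2=o(x)$ as $x\to0$. For the coefficient we use two consequences of concavity. First, $\delta h'(\delta)\le h(\delta)-h(0)\le h(\delta)$. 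Second, $h'$ is nonincreasing, so $h'(\delta)\ge h'(1/2)$ for $\delta<1/2$. Combining these,
\[
\Big|\frac{h'(\delta)}{h(\delta)}\Big|\,z(\delta)^2\le\Big(\frac1\delta+\frac{|h'(1/2)|}{h(\delta)}\Big)\,o(\delta)\longrightarrow0 .
\]
The symmetric argument handles $x=1$, so the boundary term tends to zero.

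All remaining integrands on the right are nonnegative, so monotone convergence gives the identity on $(0,1)$. In particular each quantity is bounded by $M<\infty$, so $\int_0^1|z'|^2\le M$. Together with $z(0^+)=z(1^-)=0$, this gives $z\in H_0^1(0,1)$. Finally, applying the ground state identity \eqref{eq:ground-state} to $z$ gives $\int_0^1|z'|^2=\pi^2Z+E$. This yields \eqref{eq:positive-identity} and the finiteness of $E$, $V_a$, $V_b$ and the last term.
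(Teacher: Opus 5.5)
Your proposal is correct and follows the paper's proof step by step: the truncated identity from $\sqrt h\,m=-z'-\tfrac{h'}{2h}z$ with integration by parts against $-h''$, the concavity/Cauchy--Schwarz bound $z(x)^2=o(x)$, vanishing of the boundary term, monotone exhaustion, and the ground-state identity \eqref{eq:ground-state}. The one step you leave implicit is that $o(\delta)/h(\delta)\to0$ in the second term of your coefficient bound, which needs $h(\delta)\ge 2\delta\,h(1/2)$ (from concavity); the paper avoids this by using the supporting line at $x=1$ to get $h'/h\ge-1/(1-x)$ directly.
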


\begin{proof}
On compact subintervals of $(0,1)$, the identity $F'=-hm$ can be
rewritten as
\[
\sqrt h\,m=-z'-\frac{h'}{2h}z.
\]
We choose truncation points $\eps$ and $1-\eps$ at which $h'$ has no
jump. Using the above identity and integrating by parts, we obtain
\begin{align}\label{eq:truncated-flux}
\int_\eps^{1-\eps}hm^2
=\int_\eps^{1-\eps}|z'|^2
+\frac34\int_\eps^{1-\eps}\left(\frac{h'}h\right)^2z^2+\frac12\int_{(\eps,1-\eps)}\frac{z^2}{h}\,d(-h'')
+\left[\frac{h'}{2h}z^2\right]_\eps^{1-\eps}.
\end{align}
We now show that the boundary term tends to zero. For
$0\le t\le x\le1/2$, the concavity of $h$ gives
$h(t)\le(1-t)h(x)/(1-x)\le2h(x)$. Hence, by Cauchy--Schwarz
inequality,
\begin{equation*}
z(x)^2\le\frac{1}{h(x)}\int_0^xh\int_0^xhm^2
\le2x\int_0^xhm^2=o(x).
\end{equation*}
Similarly, $z(x)^2=o(1-x)$ as $x\to1$. On the other hand, the concavity
of $h$ also gives
\[
-\frac1{1-x}\le\frac{h'}h\le\frac1x,
\]
so the boundary term in \eqref{eq:truncated-flux} tends to zero. All the
other terms on the right-hand side of \eqref{eq:truncated-flux} are nonnegative. Letting the truncated
intervals exhaust $(0,1)$, we see that these terms are finite and that
$z\in H_0^1(0,1)$. The identity \eqref{eq:positive-identity} then
follows from \eqref{eq:ground-state}.
\end{proof}

\subsection{The transverse estimate}

We now estimate the transverse part of the energy. For $0<t<1$, let
\begin{equation}\label{eq:AB}
\Acoef(t)=\frac{2\tan(\pi t/2)}{\pi t},
\qquad
\Bcoef(t)=\frac1{\pi^2t^2}-\frac{\cot(\pi t/2)}{2\pi t}.
\end{equation}
Both functions extend continuously to $t=0$, with values $1$ and $1/12$,
respectively. For the functions in \eqref{eq:profiles}, we set
\begin{equation*}
T=\int_0^1z^2\bigl(\Acoef(h)c'^2+\Bcoef(h)h'^2\bigr).
\end{equation*}

\begin{lemma}\label{lem:transverse}
The quantity $T$ is finite, and for every $u\in H^1(\Omega)$ with
$\int_\Omega u=0$,
\begin{equation}\label{eq:flux-energy}
M^2\le(Z+T)\left(\int_\Omega|\nabla u|^2-\pi^2Q\right).
\end{equation}
In particular, if $M\ge\pi^2(Z+T)$ for every mean-zero
$u\in H^1(\Omega)$, then $\mu_1(\Omega)\ge\pi^2$.
\end{lemma}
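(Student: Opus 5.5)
The plan is to derive \eqref{eq:flux-energy} from an integration by parts of $M$ against the flux, followed by a weighted Cauchy--Schwarz inequality in which the transverse part is handled by a one-dimensional Neumann resolvent computation on each vertical section.

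\textbf{Step 1: rewriting $M$.} Since $F'=-hm$ and $F(0)=F(1)=0$, I would write $M=-\int_0^1F'm=\int_0^1Fm'$. The boundary terms are justified as in Lemma~\ref{lem:flux-positive}: truncate at $\eps$ and $1-\eps$, and use $z^2=o(\min\{x,1-x\})$. Differentiating $hm=\int_a^bu\,dy$ and writing $u=m+w$ on the top and bottom boundaries gives
\[
hm'=\int_a^b u_x\,dy+b'w(x,b)-a'w(x,a).
\]
Here the terms $b'm-a'm=h'm$ cancel. I would then split the boundary functional into its odd and even parts with respect to the midpoint $c$:
\[
b'w(b)-a'w(a)=c'\bigl(w(b)-w(a)\bigr)+\tfrac{h'}2\bigl(w(b)+w(a)\bigr).
\]
This yields
\[
M=\int_0^1\frac{F}{h}\int_a^bu_x\,dy\,dx+\int_0^1\frac Fh\,\beta_x(w)\,dx,
\]
where $\beta_x(w)$ denotes the right-hand side of the splitting above.

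\textbf{Step 2: the sectional extremal problem.} On a section of length $h$, with $h\le r<1$, I would compute
\[
S(h,c',h')=\sup\Bigl\{\beta(w)^2\Big/\int(w_y^2-\pi^2w^2)\Bigr\}
\]
over mean-zero $w\in H^1$. The denominator is positive by \eqref{eq:slice-Poincare}. The odd and even parts decouple. For the odd part, the maximizer solves $-w''-\pi^2w=0$ with Neumann data proportional to $c'$. Its solution is a multiple of $\sin(\pi(y-c))$, which gives $2c'^2\tan(\pi h/2)/\pi=h\Acoef(h)c'^2$. For the even part, the mean-zero constraint introduces a Lagrange constant. Solving with $\cos(\pi(y-c))$ plus a constant gives $h\Bcoef(h)h'^2$, and the $\cot$ and $1/(\pi^2h^2)$ terms come out of eliminating the constant. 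Hence
\[
\beta_x(w)^2\le h\bigl(\Acoef(h)c'^2+\Bcoef(h)h'^2\bigr)\int_a^b(w_y^2-\pi^2w^2)\,dy.
\]
I expect this explicit Green-function computation to be the main obstacle, particularly the even case. I would do it by writing the Euler--Lagrange equation and evaluating $\beta$ at the extremal, which equals the maximal ratio.

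\textbf{Step 3: Cauchy--Schwarz and the particular case.} The first term of $M$ is at most $\bigl(Z\int_\Omega u_x^2\bigr)^{1/2}$, since $\int(F/h)^2h=Z$. The second term is at most $\bigl(T\int_\Omega(u_y^2-\pi^2w^2)\bigr)^{1/2}$, using $w_y=u_y$ and $F^2/h^2\cdot h=z^2$. The inequality $(\sqrt{aX}+\sqrt{bY})^2\le(a+b)(X+Y)$, together with $\int_\Omega|\nabla u|^2-\pi^2Q=\int u_x^2+\int(u_y^2-\pi^2w^2)$, gives \eqref{eq:flux-energy}.

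Finiteness of $T$ would follow because $\Acoef$ and $\Bcoef$ are bounded on $[0,r]$. Moreover $|a'|,|b'|,|h'|\le r/\min\{x,1-x\}$ by convexity or concavity together with $0\le a,b\le r$. Since $z\in H_0^1$ by Lemma~\ref{lem:flux-positive}, Hardy's inequality \eqref{eq:Hardy} then bounds $T$.

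For the final claim, suppose $M\ge\pi^2(Z+T)$. If $Z+T=0$, then $z\equiv0$, so $F\equiv0$, $m=0$ and $M=0$. In that case $\int_\Omega|\nabla u|^2\ge\int u_y^2\ge\pi^2Q=\pi^2\int u^2$ by \eqref{eq:slice-Poincare}, since $h\le1$. Otherwise, \eqref{eq:flux-energy} gives $M\pi^2(Z+T)\le M^2\le (Z+T)(\int|\nabla u|^2-\pi^2Q)$, so $\pi^2(M+Q)\le\int|\nabla u|^2$. By \eqref{eq:variational} this yields $\mu_1(\Omega)\ge\pi^2$.
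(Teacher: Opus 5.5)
Your proposal is correct and follows essentially the paper's route: $M=\int_0^1Fm'$, a sectionwise dual estimate for $q_t$, then Cauchy--Schwarz twice. Your boundary functional $b'w(b)-a'w(a)=c'\bigl(w(b)-w(a)\bigr)+\tfrac{h'}{2}\bigl(w(b)+w(a)\bigr)$ is the paper's $\int_a^b\bigl(c'+\tfrac{h'}{h}(y-c)\bigr)w_y\,dy$ after integrating by parts on the section, and your Euler--Lagrange extremals (with Lagrange constant $-1/h$ in the even case) are exactly the representers $\psi_o,\psi_e$ behind \eqref{eq:dual-interval}. What you leave unjustified is differentiating $hm$ and using boundary traces for a general $u\in H^1(\Omega)$, and the vanishing of $[Fm]$ when $m$ is unbounded, which only holds along a suitable sequence $\eps_n\to0$ using $hm^2\in L^1$; the paper avoids both by proving \eqref{eq:flux-energy} for $u$ smooth near $\overline\Omega$ and passing to the limit via the $L^2$-continuity of $M,Q,Z,T$ from Lemma~\ref{lem:flux-positive} and Hardy's inequality.
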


\begin{proof}
For $0<t<1$, let
\[
I_t=(-t/2,t/2),
\qquad
\mathcal H_t=\left\{v\in H^1(I_t):\int_{I_t}v=0\right\},
\]
and define the symmetric bilinear form
\[
q_t(v_1,v_2)
=\int_{I_t}(v_1'v_2'-\pi^2v_1v_2),
\qquad q_t(v)=q_t(v,v).
\]
By \eqref{eq:slice-Poincare},
\[
q_t(v)\ge(1-t^2)\int_{I_t}|v'|^2,
\qquad v\in\mathcal H_t.
\]
Thus, $q_t$ is positive definite on $\mathcal H_t$.

Consider the odd and even functions
\begin{align*}
    \psi_o(s)=\frac{\sin(\pi s)}{\pi\cos(\pi t/2)},
\end{align*}
and
\begin{align*}
  \psi_e(s)=-\frac{\cos(\pi s)}{2\pi\sin(\pi t/2)}
+\frac1{\pi^2t}.  
\end{align*}
Both functions belong to $\mathcal H_t$, and direct differentiation gives
\begin{align*}
  -\psi_o''-\pi^2\psi_o=0,  
\end{align*}
and
\begin{align*}
-\psi_e''-\pi^2\psi_e=-\frac 1 t,
\end{align*}
together with the endpoint conditions
\[
\psi_o'(\pm t/2)=1,
\qquad
\psi_e'(\pm t/2)=\pm\frac12.
\]
Integrating by parts, we obtain that for every $v\in\mathcal H_t$,
\[
q_t(\psi_o,v)=\int_{I_t}v',
\qquad
q_t(\psi_e,v)=\int_{I_t}\frac{s}{t}v'.
\]
By parity, $q_t(\psi_o,\psi_e)=0$, while integration by parts yields
\begin{align*}
  q_t(\psi_o)
=\psi_o(t/2)-\psi_o(-t/2)
=\frac2\pi\tan\frac{\pi t}{2}
=t\Acoef(t),  
\end{align*}
and
\begin{align*}
 q_t(\psi_e)
=\psi_e(t/2)
=\frac1{\pi^2t}-\frac{\cot(\pi t/2)}{2\pi}
=t\Bcoef(t).   
\end{align*}
Applying the Cauchy--Schwarz inequality for $q_t$ to
$q_t(\xi_1\psi_o+\xi_2\psi_e,v)$, we obtain
\begin{equation}\label{eq:dual-interval}
\left|\int_{I_t}\left(\xi_1+\frac{\xi_2}{t}s\right)v'(s)\,ds\right|^2
\le t\{\Acoef(t)\xi_1^2+\Bcoef(t)\xi_2^2\}\,q_t(v),
\qquad \xi_1,\xi_2\in\mathbb R.
\end{equation}

Since $a$ is convex, $b$ is concave and
both take values in $[0,r]$, we have
\begin{equation}\label{eq:a'_and_b'}
	|a'(x)|+|b'(x)|\le\frac{2r}{\min\{x,1-x\}}
\end{equation}
for almost every $x\in(0,1)$.
The coefficients $\Acoef$ and $\Bcoef$ extend continuously to $t=0$ and
are therefore bounded on $[0,r]$. Hence, Hardy's inequality
\eqref{eq:Hardy} and Lemma~\ref{lem:flux-positive} give
\begin{equation}\label{eq:T-Hardy}
T\le C_r\int_0^1\frac{z^2}{\min\{x,1-x\}^2}\,dx
\le C_r\int_0^1|z'|^2\,dx
\le C_rM,
\end{equation}
where $C_r>0$ denotes a constant depending only on $r$, whose value may
change from one occurrence to the next. In particular, $T$ is finite.

Assume first that $u$ is smooth in a neighborhood of $\overline\Omega$.
Parametrizing each vertical section by $y=c(x)+h(x)s$, we have
\[
m(x)=\int_{-1/2}^{1/2}u(x,c(x)+h(x)s)\,ds,
\]
and differentiation gives that for almost every $x\in(0,1)$,
\[
m'(x)=\frac1h\int_a^b
\left[u_x+\left(c'+\frac{h'}h(y-c)\right)u_y\right]\,dy.
\]
Since $F'=-hm$, integration by parts yields
\begin{equation}\label{eq:hm_2}
\int_\varepsilon^{1-\varepsilon}hm^2
=\int_\varepsilon^{1-\varepsilon}Fm'
-[Fm]_\varepsilon^{1-\varepsilon}.
\end{equation}
Since $m$ is bounded and $F(0)=F(1)=0$, the boundary term vanishes as
$\varepsilon\to0$. Moreover, the bounds used in \eqref{eq:T-Hardy} also
imply
\[
\begin{aligned}
\int_\Omega\left(\frac Fh\right)^2
\left[1+\left(c'+\frac{h'}h(y-c)\right)^2\right]
=\int_0^1z^2\left(1+c'^2+\frac{h'^2}{12}\right)\,dx
<\infty.
\end{aligned}
\]
Therefore, we can choose $\varepsilon\to0$ in \eqref{eq:hm_2}. Since
$w_y=u_y$, we obtain
\begin{equation}\label{eq:flux-pairing}
M=\int_0^1Fm'
=\int_\Omega\frac Fh
\left[u_x+\left(c'+\frac{h'}h(y-c)\right)w_y\right].
\end{equation}
Since $\int_\Omega(F/h)^2=Z$, the first term is
bounded in absolute value by $\sqrt Z\,\norm{u_x}_2$. For the second
term, we apply \eqref{eq:dual-interval} for almost every $x$ with
\[
t=h(x),\qquad v(s)=w(x,c(x)+s),
\qquad \xi_1=c'(x),\qquad \xi_2=h'(x).
\]
By Cauchy--Schwarz inequality, we have
\[
\abslt{\int_\Omega\frac Fh
	\left[\left(c'+\frac{h'}h(y-c)\right)w_y\right]}
\le\sqrt T\left(\int_\Omega(w_y^2-\pi^2w^2)\right)^{1/2}.
\]
Consequently, \eqref{eq:flux-pairing} gives
\[
M\le
\sqrt Z\left(\int_\Omega u_x^2\right)^{1/2}
+\sqrt T\left(\int_\Omega(w_y^2-\pi^2w^2)\right)^{1/2}.
\]
Applying the Cauchy--Schwarz inequality once more to these two terms and
using $w_y=u_y$, we obtain \eqref{eq:flux-energy} for every mean-zero
function $u$ which is smooth in a neighborhood of $\overline\Omega$.

For a general $u\in H^1(\Omega)$ with mean zero, we choose smooth mean-zero functions $u_n$
converging to $u$ in $H^1(\Omega)$, and we denote the corresponding
quantities by $m_n$, $z_n$, $M_n$, $Q_n$, $Z_n$ and $T_n$. Then
\[
\int_0^1h|m_n-m|^2
\le\int_\Omega|u_n-u|^2.
\]
Applying Lemma~\ref{lem:flux-positive} and \eqref{eq:T-Hardy} to
$u_n-u$, we get
\[
\begin{aligned}
\int_0^1|z_n'-z'|^2\,dx+\int_0^1|z_n-z|^2
\{\Acoef(h)c'^2+\Bcoef(h)h'^2\}\,dx
\le C_r\int_\Omega|u_n-u|^2.
\end{aligned}
\]
It follows that $M_n\to M$, $Q_n\to Q$, $Z_n\to Z$ and $T_n\to T$.
Therefore, \eqref{eq:flux-energy} passes to the limit.

Finally, suppose that $M\ge\pi^2(Z+T)$ for every mean-zero test function,
and fix a nonzero function $u\in H^1(\Omega)$ with mean zero. If $M>0$,
then \eqref{eq:flux-energy} implies
\[
M^2\le\frac M{\pi^2}
\left(\int_\Omega|\nabla u|^2-\pi^2Q\right),
\]
and hence
\[
\int_\Omega|\nabla u|^2\ge\pi^2(M+Q)
=\pi^2\int_\Omega u^2.
\]
If $M=0$, then $u$ has mean zero on almost every vertical section.
Since $h\le r<1$, applying \eqref{eq:slice-Poincare} on each section
gives
\begin{equation}\label{eq:zero-sectional-mean}
\int_\Omega|\nabla u|^2
\ge\frac{\pi^2}{r^2}\int_\Omega u^2
>\pi^2\int_\Omega u^2.
\end{equation}
In both cases, the variational characterization \eqref{eq:variational}
proves the last assertion.
\end{proof}

\subsection{The shear estimate}\label{subsec:curvature}

For the rest of this section, $R_r$ is a minimum-area enclosing
rectangle of $\Omega$. The quantity to be controlled is
\[
S=\int_0^1(a'b')_+z^2\,dx.
\]
It is finite by the slope bound in the proof of
Lemma~\ref{lem:transverse} and Hardy's inequality. Recall that
$E$, $V_a$ and $V_b$ are finite by Lemma~\ref{lem:flux-positive}.

The following argument uses the product of the two gaps between the
boundary functions and the horizontal sides of the rectangle. This
product vanishes at the contacts, and its second derivative relates
the shear directly to the boundary measures.

\begin{lemma}\label{lem:universal-shear}
Let $R_r$ be a minimum-area enclosing rectangle of $\Omega$. Then
\begin{equation}\label{eq:universal-shear}
S\le\frac5{14}r^2(E+V).
\end{equation}
\end{lemma}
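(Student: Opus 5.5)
The plan is to prove \eqref{eq:universal-shear} using the product of the two gaps, $g(x)=a(x)\bigl(r-b(x)\bigr)\ge0$, as the remark before the lemma suggests.

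\textbf{Step 1: localize $S$.} By Lemma~\ref{lem:contacts}, $(a'b')_+$ vanishes almost everywhere outside $[\alpha,\beta]$, an interval of length at most $r^2$. If $I_-(\Omega)\cap I_+(\Omega)\ne\emptyset$, then $S=0$ and there is nothing to prove. On $[\alpha,\beta]$ the function $a$ increases from its minimum and $b$ increases to its maximum. Hence $a',b'\ge0$ there and $(a'b')_+=a'b'$.

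\textbf{Step 2: compute $g''$.} In the sense of measures,
\[
g''=(r-b)\,\nu_a+a\,\nu_b-2a'b',
\qquad\text{so}\qquad
2S=\int_{[\alpha,\beta]}z^2\bigl((r-b)\,d\nu_a+a\,d\nu_b\bigr)-\int_{[\alpha,\beta]}z^2\,dg''.
\]
To convert the measure terms into $V$, I will use
\[
h+(r-b)=r-a\le r,\qquad h+a=b\le r .
\]
These give $h(r-b)\le r^2/4$ and $ha\le r^2/4$. Therefore
\[
\int z^2\bigl((r-b)\,d\nu_a+a\,d\nu_b\bigr)\le\frac{r^2}{4}\int\frac{z^2}{h}\,d(\nu_a+\nu_b)=\frac{r^2}{2}V,
\]
so this part contributes at most $\tfrac{r^2}{4}V$ to $S$, which is below the target $\tfrac5{14}r^2V$.

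\textbf{Step 3: the term $-\int z^2\,dg''$.} This is the main obstacle. The function $g$ vanishes at both contacts, since $a(\alpha)=0$ and $b(\beta)=r$. Integration by parts on $[\alpha,\beta]$ produces boundary terms $z(\alpha)^2g'(\alpha+)$ and $-z(\beta)^2g'(\beta-)$, which are both nonnegative and therefore have the wrong sign. So I would not integrate by parts naively. My first attempt would be to integrate by parts twice, moving all derivatives onto $z^2$, or better onto $(z/\phi)^2\phi^2$.

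For that I would write $z=\phi\zeta$ with $\zeta=z/\phi$. Then $\phi^2|\zeta'|^2$ is exactly the density of $E$, and since $\phi''=-\pi^2\phi$ the $\phi$-derivatives are explicit. I also need size bounds on $g$ on a window of length $d\le r^2$. Since $0\le g\le r\cdot\min(a,r-b)$ and $a,r-b$ have slopes of one sign there, both $g$ and $g'$ can be bounded on $[\alpha,\beta]$ through $a'b'$ and $d$ itself. Absorbing the resulting $a'b'$ terms back into $S$ should then be possible because $d\le r^2\le\rho^2<1/5$ by Lemma~\ref{lem:rho-constants}.

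The remaining oscillation of $z$ across the window would be controlled by $E$ through a one-dimensional Poincaré-type estimate for $\zeta$ on an interval of length $d$, in the spirit of \eqref{eq:slice-Poincare}. Tracking the constants should give the extra $\tfrac{3}{28}r^2$ in front of $E+V$ needed to reach $\tfrac5{14}=\tfrac14+\tfrac3{28}$. I expect the precise bookkeeping of this step, in particular how the boundary terms at $\alpha$ and $\beta$ are absorbed, to be where the real work lies.
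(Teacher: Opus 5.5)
Your Steps 1 and 2 match the paper's proof. Step 3 is where the lemma is actually proved, and there your plan is missing the two ideas that make it work. \textbf{Boundary terms.} Integrating by parts twice cannot remove $z(\alpha)^2g'(\alpha+)$ and $-z(\beta)^2g'(\beta-)$. The second integration by parts only produces $[(z^2)'g]$, which vanishes since $g(\alpha)=g(\beta)=0$, while $[z^2g']$ survives. Moreover, putting both derivatives on $z^2$ (or on $\phi^2\zeta^2$) requires $z''$ (or $\zeta''$), which nothing controls.

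These terms are in fact harmless. Since $a(\alpha)=0$ and $b(\beta)=r$, they equal $(r-b(\alpha))a'(\alpha+)z(\alpha)^2$ and $a(\beta)b'(\beta-)z(\beta)^2$. Because $a$ is nonincreasing left of $\alpha$ and $b$ is nonincreasing right of $\beta$, one has $a'(\alpha+)\le\nu_a\{\alpha\}$ for $\alpha>0$ and $b'(\beta-)\le\nu_b\{\beta\}$ for $\beta<1$. So they are dominated by the atoms of the curvature measures at the contacts, and these atoms are part of $V$. Your Step 2 already integrates over the closed interval $[\alpha,\beta]$; done consistently, with $g=a(r-b)$ on all of $(0,1)$, the boundary terms come out as $z(\alpha)^2g'(\alpha-)\le0$ and $-z(\beta)^2g'(\beta+)\le0$. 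If $\alpha=0$ or $\beta=1$, they vanish because $g'$ is bounded on $J$ (see below) and $z$ has zero trace. This gives $S\le\frac{r^2}{4}V+\int_Jg'zz'\,dx$.

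\textbf{The drift term.} Write $z=\phi\zeta$ and integrate by parts only the $\phi\phi'\zeta^2$ part of $zz'$:
\[
\int_Jg'zz'\,dx=-\int_Jg(\phi\phi')'\zeta^2\,dx+\int_J\Bigl(g'-2\frac{\phi'}{\phi}g\Bigr)z\phi\zeta'\,dx .
\]
The first term is at most $\pi^2\int_Jgz^2\le\frac{\pi^2d^2}{4}S$ by $g\le\frac{d^2}{4}a'b'$, which is the kind of bound you anticipated. The second term is where your sketch fails. Bounding through $a'b'$ and $d$ gives only $|g'|\le\ell\le d\,p$, where $\ell=(r-b)a'+ab'$ and $p=a'b'$. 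This is linear in $p$, so Cauchy--Schwarz against $E$ leaves $\int_Jp^2z^2$, which $S$ does not control since $p$ has no a priori bound on $J$. A Poincar\'e estimate for $\zeta$ on $J$ does not address this.

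What is needed is a bound by $\sqrt p$. The paper first shows $|g'-2\frac{\phi'}{\phi}g|\le\ell$, using $\frac{\phi'}{\phi}a\le a'$ and $-\frac{\phi'}{\phi}(r-b)\le b'$; these follow from the concavity of $\phi$ and the tangent-line inequalities. It then proves $\ell^2\le r^2dp$, which combines $\ell\le dp$ with the \emph{global} bound $\ell\le r^2$. That bound comes from evaluating tangent lines at the opposite vertical sides, $a+(1-x)a'\le a(1)\le r$ and $r-b+xb'\le r-b(0)\le r$, together with the identity $(a+(1-x)a')(r-b+xb')-\ell=(xa'-a)\bigl((1-x)b'-(r-b)\bigr)\ge0$. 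Cauchy--Schwarz and Young then give $r\sqrt{dSE}\le dS+\frac{r^2}{4}E$, hence $\bigl(1-d-\frac{\pi^2d^2}{4}\bigr)S\le\frac{r^2}{4}(E+V)$, and $d<\frac15$ yields the factor $\frac{10}{7}$. The absorption inflates the coefficient of $V$ as well, so the constant is $\frac5{14}=\frac{10}{7}\cdot\frac14$, not $\frac14+\frac3{28}$.
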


\begin{proof}
If the contact intervals intersect, then $S=0$ by
Lemma~\ref{lem:contacts} and \eqref{eq:universal-shear} holds. Otherwise, after replacing $x$ by $1-x$ if necessary, we may assume as in Lemma~\ref{lem:contacts} that
\[
\alpha=\max I_-(\Omega),\qquad
\beta=\min I_+(\Omega),\qquad
0\le\alpha<\beta\le1,\qquad d=\beta-\alpha.
\]
By Lemma \ref{lem:contacts} and \ref{lem:rho-constants}, we have $d<1/5$.
On the interval $J=(\alpha,\beta)$, put
\[
p=a'b',\qquad g=a(r-b),\qquad
\ell=(r-b)a'+ab',
\]
where $g$ is the product of the distances from the lower and upper
boundaries to the bottom and top sides of $R_r$, and it vanishes at both
contacts.
Since $a$ is convex with minimum $a(\alpha)=0$ and $b$ is
concave with maximum $b(\beta)=r$, we have $a'\ge 0$ and $b'\ge 0$ almost
everywhere on $J$. This gives
\[
S=\int_J pz^2\,dx.
\]

Convexity and the supporting-line inequalities yield the following inequalities almost everywhere
on $J$.
\begin{equation}\label{eq:shear-tangents}
\begin{aligned}
&0\le a\le(x-\alpha)a',
&& a+(1-x)a'\le r,\\
&0\le r-b\le(\beta-x)b',
&& r-b+xb'\le r.
\end{aligned}
\end{equation}
Consequently,
\[
0\le g\le\frac{d^2}{4}p,\qquad 0\le\ell\le dp.
\]
A direct computation gives
\[
\begin{aligned}
(a+(1-x)a')(r-b+xb')-\ell
=(xa'-a)\bigl((1-x)b'-(r-b)\bigr)\ge0.
\end{aligned}
\]
Together with \eqref{eq:shear-tangents}, this yields
\begin{equation}\label{eq:shear-geometric-bounds}
\ell\le r^2,\qquad \ell^2\le r^2dp.
\end{equation}
In particular, $|g'|\le\ell\le r^2$. Since $g$ vanishes at $\alpha$ and $\beta$, its extension by zero outside $J$ is Lipschitz.

On $J$, the product rule for distributions gives
\[
g''=(r-b)\nu_a+a\nu_b-2p\,dx.
\]
Note that
\[
a'(\alpha+)\le\nu_a\{\alpha\}\quad(\alpha>0),
\qquad
b'(\beta-)\le\nu_b\{\beta\}\quad(\beta<1).
\]
Therefore, multiplying by $z^2$ and integrating by parts, we obtain
\[
S\le
\frac12\int_{[\alpha,\beta]\cap(0,1)}(r-b)z^2\,d\nu_a
+\frac12\int_{[\alpha,\beta]\cap(0,1)}az^2\,d\nu_b
+\int_J g'zz'\,dx.
\]
These computations remain valid when $\alpha$ or $\beta$ is an endpoint of $[0,1]$. Indeed,
$g'$ is bounded and $z$ has zero trace, so the boundary term
$g'z^2$ of the truncated integrals tends to zero.  Since
\[
(r-b)h\le(r-b)b\le\frac{r^2}{4},\qquad
ah\le a(r-a)\le\frac{r^2}{4},
\]
we obtain
\begin{equation}\label{eq:shear-ibp}
S\le\frac{r^2}{4}V+\int_Jg'zz'\,dx.
\end{equation}

It remains to estimate the last integral in \eqref{eq:shear-ibp}. 
Since $\phi=\sin(\pi x)$ is positive and concave on $(0,1)$, we have
\[
-\frac1{1-x}\le\frac{\phi'}{\phi}\le\frac1x.
\]
Combining these inequalities with \eqref{eq:shear-tangents}, we obtain
\[
\frac{\phi'}{\phi}a\le a',
\qquad
-\frac{\phi'}{\phi}(r-b)\le b'.
\]
Hence,
\begin{equation}\label{eq:shear-drift}
\left|g'-2\frac{\phi'}{\phi}g\right| = \left|(r-b)\Bigl(a'-\frac{\phi'}{\phi}a\Bigr)
-a\Bigl(b'+\frac{\phi'}{\phi}(r-b)\Bigr)\right|
\le\ell\le r\sqrt{dp}.
\end{equation}
Write $v=z/\phi$. Since $g$ vanishes at $\alpha, \beta$, integration by parts gives
\begin{equation}\label{eq:Jg'zz'}
\int_Jg'zz'\,dx
=-\int_Jg(\phi\phi')'v^2\,dx
+\int_J\left(g'-2\frac{\phi'}{\phi}g\right)z\phi v'\,dx.
\end{equation}
If $\alpha=0$ or $\beta=1$, the additional boundary term
$g(\phi'/\phi)z^2$ also vanishes there. Indeed, the Lipschitz bound on $g$
cancels the endpoint singularity of $\phi'/\phi$, and $z$ has zero trace.
The remaining integrals near the endpoints are finite by Hardy's
inequality.

Since $0\le g\le\frac{d^2}{4}p$ and $(\phi\phi')'=\phi'^2-\pi^2\phi^2\ge-\pi^2\phi^2$,
we have
\begin{equation*}
	-\int_Jg(\phi\phi')'v^2\,dx
	\le\pi^2\int_Jgz^2\,dx\le\frac{\pi^2d^2}{4}S.
\end{equation*}
By \eqref{eq:shear-drift} and Cauchy--Schwarz inequality,
we obtain
\begin{equation*}
	\int_J\left(g'-2\frac{\phi'}{\phi}g\right)z\phi v'\,dx
	\le r\sqrt d\left(\int_Jpz^2\,dx\right)^{1/2}
	\left(\int_0^1\phi^2|v'|^2\,dx\right)^{1/2}
	=r\sqrt{dSE}.
\end{equation*}
Combining above inequalities with Young's inequality, \eqref{eq:Jg'zz'} gives
\begin{equation*}
	\int_Jg'zz'\,dx
\le\frac{\pi^2d^2}{4}S+ dS+\frac{r^2}{4}E.
\end{equation*}
Substitution into \eqref{eq:shear-ibp} yields
\begin{equation}\label{eq:shear-general-remainder}
\left(1-d-\frac{\pi^2d^2}{4}\right)S
\le\frac{r^2}{4}(E+V).
\end{equation}
Since $d<1/5$, we have
\[
1-d-\frac{\pi^2d^2}{4}>\frac7{10},
\]
and \eqref{eq:universal-shear} follows.
\end{proof}

\section{\texorpdfstring{Proof of Theorem~\ref{thm:rho}}{Proof of the main theorem}}\label{sec:proof}

In this section, we combine the results of Sections~\ref{sec:prelim}
and~\ref{sec:flux} to prove Theorem~\ref{thm:rho}. We establish the remainder
inequality \eqref{eq:rho-remainder} directly on $\Omega$ and deduce the
lower bound, and then we use the vanishing of the nonnegative
remainder terms to characterize equality.

\begin{proof}[Proof of Theorem~\ref{thm:rho}]
We normalize the minimum-area enclosing rectangle to $R_r=[0,1]\times[0,r]$, where
$0<r\le\rho$. We fix a mean-zero function $u\in H^1(\Omega)$ and use the
notation of Section~\ref{sec:flux}. The identities
\[
c'^2=\frac{h'^2}{4}+a'b',
\qquad
\frac{\Acoef(t)}4+\Bcoef(t)
=\frac{1-\pi t\cot(\pi t)}{\pi^2t^2}
\]
give
\[
\pi^2T
=\int_0^1\bigl(1-\pi h\cot(\pi h)\bigr)
\left(\frac{h'}h\right)^2z^2
+\pi^2\int_0^1\Acoef(h)a'b'z^2.
\]
Both integrals are finite. Indeed, the bound
\eqref{eq:a'_and_b'} and Hardy's inequality \eqref{eq:Hardy} give
\[
\int_0^1\Acoef(h)|a'b'|z^2\, dx
\le C_r\int_0^1\frac{z(x)^2}{\min\{x,1-x\}^2}\,dx
\le C_r\int_0^1|z'|^2\, dx<\infty,
\]
and then the first integral is finite since $T<+\infty$. 

Write $\Gflux:=M-\pi^2(Z+T)$.
From \eqref{eq:positive-identity}, we obtain
\begin{align}\label{eq:master-exact}
\Gflux=E+V
+\int_0^1\left(\pi h\cot(\pi h)-\frac14\right)
\left(\frac{h'}h\right)^2z^2
-\pi^2\int_0^1\Acoef(h)a'b'z^2.
\end{align}
Since $\Acoef$ is positive and increasing on $(0,1)$ and $0\le h\le r$,
we have
\[
\int_0^1\Acoef(h)a'b'z^2
\le\Acoef(r)\int_0^1(a'b')_+z^2
=\Acoef(r)S.
\]
By Lemma~\ref{lem:universal-shear}, we obtain
\begin{align}\label{eq:rho-remainder}
\Gflux\ge(1-\kappa_r)(E+V)
+\int_0^1\left(\pi h\cot(\pi h)-\frac14\right)
\left(\frac{h'}h\right)^2z^2,
\end{align}
where
\[
\kappa_r=\frac{5}{14}\pi^2\Acoef(r)r^2
=\frac{5\pi r}{7}\,\tan\frac{\pi r}{2}.
\]
Since $f(s)=s\cot s$ is strictly decreasing on $(0,\pi/2)$ and $h\le r\le\rho$, we obtain
\[
\pi h\cot(\pi h)-\frac14\ge0.
\]
Moreover, $\kappa_r$ is increasing in $r$, and
Lemma~\ref{lem:rho-constants} gives $\kappa_r\le\kappa_\rho<1$.
It follows that 
\[
\Gflux\ge(1-\kappa_\rho)(E+V)\ge 0,
\]
that is, 
\begin{equation}\label{eq:MgeZ+T}
	M\ge\pi^2(Z+T).
\end{equation}
Since the
mean-zero function $u$ was arbitrary, Lemma~\ref{lem:transverse} yields
$\mu_1(\Omega)\ge\pi^2$.

It remains to show the equality case.
Suppose that $\mu_1(\Omega)=\pi^2$. A bounded convex domain with nonempty
interior is a Lipschitz domain, so the embedding
$H^1(\Omega)\hookrightarrow L^2(\Omega)$ is compact, and the infimum in
\eqref{eq:variational} is attained. Let $u$ be a nonzero first
eigenfunction, which has mean zero. Then
\[
\int_\Omega|\nabla u|^2=\pi^2(M+Q).
\]
By \eqref{eq:zero-sectional-mean}, we have $M>0$. Substituting the
preceding identity into \eqref{eq:flux-energy}, we obtain
\[
M^2\le\pi^2M(Z+T),
\]
that is 
\[
M\le\pi^2(Z+T).
\]
Combining with \eqref{eq:MgeZ+T}, we have $\Gflux=0$.
Thus, every nonnegative term on the right-hand side of
\eqref{eq:rho-remainder} vanishes. Since $1-\kappa_r>0$, this implies
\begin{equation*}
E=V_a=V_b=0,
\end{equation*}
and
\begin{equation}\label{eq:h_constant}
\int_0^1\left(\pi h\cot(\pi h)-\frac14\right)
\left(\frac{h'}h\right)^2z^2=0.
\end{equation}
The identity $E=0$ shows that $z/\phi$ is constant on $(0,1)$, so that
\[
z(x)=\lambda\sin(\pi x).
\]
The constant $\lambda$ is nonzero, since otherwise $F=\sqrt h\,z=0$ and
$F'=-hm$ would give $M=0$. Therefore $z^2/h$ is continuous and strictly
positive in $(0,1)$. Since $\nu_a$ and
$\nu_b$ are nonnegative and $V_a=V_b=0$, it follows that
\[
\nu_a=\nu_b=0\quad\text{on }(0,1).
\]
Thus $a$, $b$ and $h$ are affine.

Since $h(x)\le r\le\rho$, \eqref{eq:h_constant} gives
\[
\left(\pi h\cot(\pi h)-\frac14\right)
\left(\frac{h'}h\right)^2z^2=0
\qquad\text{on }(0,1),
\]
which implies $h$ is constant
and $a'=b'$ in $(0,1)$. The convexity of $a$ and $-b$ gives $a''=b''=0$ and $a'=b'=k$ for some constant $k$. 
By Lemma~\ref{lem:contacts}, $k=0$, so the boundary functions $a$ and $b$ are constant. 
Since every side of $R_r$ intersects $\overline\Omega$, 
it follows that $a\equiv0$ and $b\equiv r$, and hence $\Omega=\Int R_r$. Conversely, $\mu_1(\Int R_r)=\pi^2$ because $r<1$.
Scaling back proves the theorem.
\end{proof}

\input{appendix}

\bibliographystyle{amsalpha}
\bibliography{ref}
\end{document}

%% file: appendix.tex
\appendix
\section{Near-optimality of the aspect-ratio bound}\label{sec:near-optimality}

In this appendix, we show that the threshold $\rho$ in
Theorem~\ref{thm:rho} cannot be replaced by any number larger than
$r_*\approx0.457993$. This implies that the optimal threshold lies in $[\rho,r_*]$. The counterexamples are trapezoids obtained from the rectangle $R_r$ by slightly tilting its top side. Their construction
is governed by the following function. For $0<r<1/2$, define
\begin{equation}\label{eq:critical-coefficient}
	\mathcal C(r)=\frac{8r}{\pi}
	\left\{\cot(\pi r)
	-2\sum_{k=1}^{\infty}
	\frac{\coth\!\left(\pi r\sqrt{4k^2-1}\right)}
	{(4k^2-1)^{5/2}}\right\}.
\end{equation}

\begin{lemma}\label{lem:app-critical-coefficient}
	The function $\mathcal C$ is continuous and strictly decreasing on
	$(0,1/2)$. It has a unique zero $r_*\in(0,1/2)$, and
	$\mathcal C(r)<0$ for $r_*<r<1/2$.
\end{lemma}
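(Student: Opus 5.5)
We prove Lemma~\ref{lem:app-critical-coefficient} by splitting $\mathcal C$ into an explicit trigonometric part and a uniformly convergent series, and checking monotonicity of each piece. Write
\[
\mathcal C(r)=\frac{8}{\pi}\Bigl\{r\cot(\pi r)-2\sum_{k\ge1}\frac{r\coth(\pi r\sigma_k)}{\sigma_k^{5}}\Bigr\},\qquad \sigma_k=\sqrt{4k^2-1}.
\]

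\emph{Continuity.} For $r$ in a compact subset $[r_0,r_1]\subset(0,1/2)$ we have $\coth(\pi r\sigma_k)\le\coth(\pi r_0\sqrt3)$. Hence the series is dominated by a constant times $\sum_k\sigma_k^{-5}<\infty$. By the Weierstrass M-test it converges uniformly to a continuous function. The term $r\cot(\pi r)$ is continuous on $(0,1/2)$, so $\mathcal C$ is continuous there.

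\emph{Strict monotonicity.} First, $r\cot(\pi r)=\pi^{-1}f(\pi r)$ with $f(s)=s\cot s$. As recalled before \eqref{eq:rho}, $f$ is strictly decreasing on $(0,\pi/2)$, so this term is strictly decreasing in $r$. Next, each summand $r\coth(\pi r\sigma_k)=(\pi\sigma_k)^{-1}\,g(\pi r\sigma_k)$ with $g(s)=s\coth s$. Its derivative is
\[
g'(s)=\frac{\sinh s\cosh s-s}{\sinh^2 s}=\frac{\tfrac12\sinh 2s-s}{\sinh^2 s}>0\qquad(s>0),
\]
so $g$ is strictly increasing. Thus every summand is increasing in $r$, and it enters $\mathcal C$ with a minus sign. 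The uniform convergence preserves monotonicity of the limit, since a pointwise limit of increasing functions is increasing; strictness already comes from the cotangent part. Therefore $\mathcal C$ is strictly decreasing on $(0,1/2)$.

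\emph{Signs at the ends.} As $r\to0^+$, we have $r\cot(\pi r)\to1/\pi$. For the series, $r\coth(\pi r\sigma_k)\to1/(\pi\sigma_k)$, and the summands are bounded by their values at, say, $r=1/4$. By dominated convergence the series tends to $\pi^{-1}\sum_k\sigma_k^{-6}$. Hence
\[
\lim_{r\to0^+}\mathcal C(r)=\frac{8}{\pi^2}\Bigl(1-2\sum_{k\ge1}(4k^2-1)^{-3}\Bigr).
\]
The sum is at most $3^{-3}+15^{-3}+\sum_{k\ge3}(3k^2)^{-3}<0.04$, so the limit is positive. As $r\to1/2^-$, we have $r\cot(\pi r)\to0$, while the series tends to $\tfrac12\sum_k\coth(\pi\sigma_k/2)\,\sigma_k^{-5}>0$. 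Hence
\[
\lim_{r\to1/2^-}\mathcal C(r)<0.
\]

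\emph{Conclusion.} By the intermediate value theorem there is a zero $r_*\in(0,1/2)$. Strict monotonicity makes it unique and gives $\mathcal C(r)<0$ for $r_*<r<1/2$.

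The only step requiring a numerical bound is the positivity of the limit at $0$, which reduces to the estimate $2\sum_k(4k^2-1)^{-3}<1$ checked above. The value $r_*\approx0.457993$ is not needed for the lemma itself.
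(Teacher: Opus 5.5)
Your proof is correct and follows essentially the same route as the paper: the same rewriting of $\mathcal C$ as an $s\cot s$ term minus a series of $s\coth s$ terms, monotonicity of each piece, a positive limit at $0$, a negative value at $1/2$, and then the intermediate value theorem together with strict monotonicity. Your only additions are the explicit numerical check that $2\sum_{k\ge1}(4k^2-1)^{-3}<1$, which the paper asserts without proof, and the use of the one-sided limit at $1/2$ (justified by the same M-test bound) in place of evaluating $\mathcal C(1/2)$ directly.
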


\begin{proof}
	Since $\coth$ is decreasing on $(0,\infty)$, the series in
	\eqref{eq:critical-coefficient} converges uniformly on compact
	subintervals of $(0,1/2]$. Thus, $\mathcal C$ is a continuous function on
	$(0,1/2]$.
	Set $t_k=\pi r\sqrt{4k^2-1}$ and rewrite \eqref{eq:critical-coefficient} as
	\[
	\mathcal C(r)=\frac{8}{\pi^2}
	\left\{\pi r\cot(\pi r)
	-2\sum_{k=1}^{\infty}\frac{t_k\coth t_k}{(4k^2-1)^{3}}\right\}.
	\]
	Recall that $t\cot t$ is strictly decreasing on $(0,\pi/2)$. Moreover,
	since for $t>0$,
	\[
	\frac{d}{dt}(t\coth t)
	=\frac{\sinh t\cosh t-t}{\sinh^2t}>0,
	\]
	it follows that
	$\mathcal C$ is strictly decreasing on $(0,1/2]$.
	
	Since $t\cot t\to 1$ and $t\coth t\to 1$ as $t\to0$, we have
	\[
	\lim_{r\to0}\mathcal C(r)
	=\frac8{\pi^2}\left\{1-2\sum_{k=1}^{\infty}
	\frac1{(4k^2-1)^3}\right\}>0.
	\]
	On the other hand,
	$\cot(\pi/2)=0$, so $\mathcal C(1/2)<0$. Since $\mathcal C$ is
	continuous and strictly decreasing on $(0,1/2]$, it has a unique zero
	$r_*\in(0,1/2)$, and $\mathcal C(r)<0$ for $r_*<r<1/2$.
\end{proof}

\begin{proposition}\label{prop:app-trapezoids}
	Let $r_*<r<1/2$. For $0<s<1$, consider the convex trapezoid
	\[
	\Omega_{r,s}
	=\{(x,y):0<x<1,\ 0<y<r(1-sx)\}.
	\]
	Then for every $0<s<1$, the rectangle $R_r=[0,1]\times[0,r]$ is a
	minimum-area enclosing rectangle of $\Omega_{r,s}$, and
	\[
	d_H(\overline{\Omega_{r,s}},R_r)\le rs,
	\]
	where $d_H$ denotes the Hausdorff distance between compact sets.
	Moreover,
	\[
	\mu_1(\Omega_{r,s})<\pi^2
	\]
	for all sufficiently small $s>0$.
\end{proposition}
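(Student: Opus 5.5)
The proposition has three parts: the geometric claim about $R_r$, the Hausdorff estimate, and the eigenvalue inequality. I would treat them in that order.

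\textbf{Hausdorff estimate.} This is immediate. Since $\overline{\Omega_{r,s}}\subset R_r$, every point of $\overline{\Omega_{r,s}}$ lies in $R_r$. Conversely, a point $(x,y)\in R_r$ with $y>r(1-sx)$ lies vertically within $r-r(1-sx)=rsx\le rs$ of $(x,r(1-sx))\in\overline{\Omega_{r,s}}$.

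\textbf{Minimality of $R_r$.} I would reuse the widths $W_X(\theta),W_Y(\theta)$ from the proof of Lemma~\ref{lem:contacts}. The hull of $\overline{\Omega_{r,s}}$ has vertices $(0,0),(1,0),(1,r(1-s)),(0,r)$, so both widths are explicit piecewise trigonometric functions of $\theta$. A cleaner route is the classical fact that a minimum-area enclosing rectangle of a convex polygon has a side containing an edge. This reduces the question to four candidate orientations: the bottom edge, the two vertical edges (both giving $R_r$), and the slanted top edge.
\begin{itemize}
\item The horizontal/vertical orientations give area $r$.
\item For the slanted edge, the direction is $(1,-rs)$ normalized. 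The width along it is $\sqrt{1+r^2s^2}$, determined by the vertices $(0,r)$ and $(1,0)$ projected. The perpendicular width is the distance from $(1,0)$ to the line $y=r(1-sx)$, namely $r(1-s)/\sqrt{1+r^2s^2}$, or possibly that from $(0,0)$, namely $r/\sqrt{1+r^2s^2}$; one must take the maximum.
\end{itemize}
The product is then at least $r$, and I would check this inequality carefully.

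\textbf{Eigenvalue inequality.} The heart of the matter is $\mu_1(\Omega_{r,s})<\pi^2$ for small $s$. I would construct a test function $u_s=\cos(\pi x)+s\,\psi(x,y)+\text{(constant to fix the mean)}$ and expand the Rayleigh quotient to second order in $s$.
\begin{itemize}
\item It is cleanest to first map $\Omega_{r,s}$ to $R_r$ by $(x,y)\mapsto(x,\,y/(1-sx))$. This turns the problem into one on the fixed rectangle with a perturbed metric and density.
\item At order $s$, the quotient's first variation vanishes or is explicit. Since $\pi^2$ is a simple eigenvalue of $R_r$ for $r<1/2$ (the next one is $4\pi^2$ or $\pi^2/r^2>4\pi^2$), standard perturbation theory applies.
\item The eigenvalue is analytic in $s$, with $\mu_1=\pi^2+\mu'(0)s+\tfrac12\mu''(0)s^2+O(s^3)$.
\item The first-order Hadamard formula gives $\mu'(0)=-\int_{\text{top}}\bigl(|\nabla u_0|^2-\pi^2u_0^2\bigr)\,rx\,dx/\int u_0^2$, with $u_0=\cos\pi x$. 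On the top side $|\nabla u_0|^2-\pi^2u_0^2=\pi^2\cos(2\pi x)\cdot(-1)$, and I expect this term to cancel.
\item Hence the sign is decided by the second-order term. This term requires solving a corrector problem on $R_r$: $-\Delta\psi-\pi^2\psi=0$ with Neumann data $\partial_y\psi$ on the top edge given by the normal-velocity term.
\end{itemize}

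\textbf{The main obstacle.} Solving the corrector by separation of variables, expanding in $\cos(k\pi x)\cosh(\cdot\,y)$ modes, is where I expect the difficulty to lie. The dominant resonant term leads to the $\cot(\pi r)$ contribution, and the odd modes $k$ produce coefficients like $(4k^2-1)^{-5/2}\coth(\pi r\sqrt{4k^2-1})$ after Fourier-expanding $x\sin(\pi x)$-type data against $\cos(2k\pi x)$-type modes. I would aim to show the second-order coefficient equals a positive multiple of $\mathcal C(r)$. Lemma~\ref{lem:app-critical-coefficient} then gives $\mathcal C(r)<0$ for $r>r_*$, hence $\mu_1(\Omega_{r,s})<\pi^2$ for small $s$.

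To keep the argument variational rather than relying on perturbation theory, I would plug the explicit truncated-series test function into the Rayleigh quotient and verify that the $O(s^2)$ coefficient is $\mathcal C(r)$ times a positive constant. The $O(s^3)$ remainder is controlled by the smoothness of the explicit functions.
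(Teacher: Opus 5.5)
Your Hausdorff estimate matches the paper's. Your eigenvalue argument, in its final variational form, also follows the paper's route: the test function $U_s=\Psi-s\chi_r$ minus its mean, where $\chi_r$ solves $-\Delta\chi_r=\pi^2\chi_r$ on $\mathcal D_r$ with Neumann data only on the top side (Lemma~\ref{lem:app-correction}), has quadratic form $r\mathcal C(r)s^2+O_r(s^3)$.

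The Hadamard/analytic-perturbation framing and the pull-back $(x,y)\mapsto(x,y/(1-sx))$ are unnecessary. Since $\Omega_{r,s}\subset\mathcal D_r$, the paper simply restricts the $C^1(\overline{\mathcal D_r})$ corrector and pays $O_r(s^3)$ for the sliver $\mathcal D_r\setminus\Omega_{r,s}$ of area $rs/2$.

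The substance of the proof, however, is the computation you only announce (``I would aim to show\ldots''). Two points of your sketch need correcting before it would go through.

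\textbf{The top Neumann datum.} It is $\partial_y\chi_r(x,r)=r\Psi'(x)$, a multiple of $\sin(\pi x)$ with no factor $x$. It comes from the boundary term in \eqref{eq:app-Psi-weak}, since the normal of the tilted side is proportional to $(rs,1)$. In Hadamard form, the $x$ from the normal velocity drops out because $(x\Psi')'+\pi^2x\Psi=\Psi'$. The cosine expansion of this datum contains only $1$ and $\cos(2k\pi x)$, i.e.\ the even modes, not the odd ones. That is exactly why $\cot(\pi r)$ and $(4k^2-1)^{-5/2}\coth(\pi r\sqrt{4k^2-1})$ appear.

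\textbf{The mean-fixing constant.} It is not free. With $\bar U=|\Omega_{r,s}|^{-1}\int_{\Omega_{r,s}}U$, subtracting it adds $+\pi^2|\Omega_{r,s}|\bar U^2$ to the quadratic form. This is a positive $O(s^2)$ term, and hence fatal, unless $\int_{\Omega_{r,s}}U_s=O(s^2)$. For $\Psi$ alone, $\int_{\Omega_{r,s}}\Psi=2\sqrt2\,rs/\pi^2$. The paper secures the cancellation through $\int_{\mathcal D_r}\chi_r=-\int_{\mathcal D_r}x\Psi$, obtained by testing \eqref{eq:app-correction-weak} with $\eta=1$. It then gets the coefficient from \eqref{eq:app-Psi-weak} with $\eta=\chi_r$, together with $\int_{\mathcal D_r}(|\nabla\chi_r|^2-\pi^2\chi_r^2)=r\int_0^1\Psi'(x)\chi_r(x,r)\,dx=-r\mathcal C(r)$.

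Truncating the series is unnecessary, since the full series is $C^1$ up to the boundary. If you do truncate at $k\le N$, $\mathcal C(r)$ is replaced by its partial sum, which is strictly larger. So $N$ would have to be taken large depending on $r$; for instance, $N=0$ gives a positive coefficient.

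\textbf{Minimality.} Here you take a genuinely different route, using the Freeman--Shapira edge-flush property to reduce to the orientation of the slanted edge. This works, but your width along that edge is wrong. Unnormalized projections onto $(1,-rs)$ run from $-r^2s$ at $(0,r)$ to $1$ at $(1,0)$, so the width is $(1+r^2s)/\sqrt{1+r^2s^2}$; the number $\sqrt{1+r^2s^2}$ is the length of the top edge. With the perpendicular width $r/\sqrt{1+r^2s^2}$, the area is $r(1+r^2s)/(1+r^2s^2)>r$, so your conclusion survives.

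The paper's argument is shorter and needs no polygon theorem. $\overline{\Omega_{r,s}}$ contains the triangle with vertices $(0,0)$, $(1,0)$, $(0,r)$, of area $r/2$, and a triangle inside a rectangle occupies at most half its area. Your route gives slightly more information (the tilted orientation is strictly worse), at the cost of quoting an external result.
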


The test functions in the proof of
Proposition~\ref{prop:app-trapezoids} are perturbations of the first
nonconstant Neumann eigenfunction of the rectangle. Put
\[
\mathcal D_r=(0,1)\times(0,r),
\qquad
\Psi(x)=\sqrt2\cos(\pi x).
\]
Then $-\Delta\Psi=\pi^2\Psi$ and $\int_{\mathcal D_r}\Psi^2=r$. On
$\Omega_{r,s}$, however, $\Psi$ does not have mean zero, and its normal
derivative does not vanish on the tilted side. We eliminate both obstructions by subtracting $s\chi_r$, 
where $\chi_r$ is the function on $\mathcal D_r$ given by the following lemma.

\begin{lemma}\label{lem:app-correction}
	For each $0<r<1/2$, there is a function
	$\chi_r\in C^1(\overline{\mathcal D_r})$ such that
	\begin{equation}\label{eq:app-correction-weak}
		\int_{\mathcal D_r}
		\bigl(\nabla\chi_r\cdot\nabla\eta-\pi^2\chi_r\eta\bigr)
		=r\int_0^1\Psi'(x)\eta(x,r)\,dx
	\end{equation}
	for every $\eta\in H^1(\mathcal D_r)$. Moreover,
	\begin{equation}\label{eq:app-correction-identities}
		\begin{aligned}
			&\int_{\mathcal D_r}\chi_r
			=-\int_{\mathcal D_r}x\Psi
			=\frac{2\sqrt2\,r}{\pi^2},\\
			&\int_{\mathcal D_r}\bigl(|\nabla\chi_r|^2-\pi^2\chi_r^2\bigr)
			=-r\mathcal C(r).
		\end{aligned}
	\end{equation}
\end{lemma}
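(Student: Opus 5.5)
The plan is to construct $\chi_r$ explicitly by separation of variables, as a cosine series in $x$ with coefficients solving ODEs in $y$. Then \eqref{eq:app-correction-weak} says that $\chi_r$ is a weak solution of
\[
-\Delta\chi_r-\pi^2\chi_r=0\ \text{in }\mathcal D_r,\qquad
\partial_\nu\chi_r=r\Psi'(x)\ \text{on }\{y=r\},\qquad
\partial_\nu\chi_r=0\ \text{on the other three sides}.
\]
Since $\pi^2$ is a Neumann eigenvalue of $\mathcal D_r$ with eigenfunction $\Psi$, this problem is solvable only if the data is orthogonal to $\Psi$. The Fredholm condition is
\[
r\int_0^1\Psi'\Psi\,dx=\frac r2\bigl[\Psi^2\bigr]_0^1=0,
\]
so the compatibility holds. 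Rather than invoking Fredholm theory, I will write the solution down directly.

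\textbf{Construction.} Expand
\[
\Psi'(x)=-\sqrt2\pi\sin(\pi x)=-2\sqrt2+4\sqrt2\sum_{k\ge1}\frac{\cos(2k\pi x)}{4k^2-1},
\]
using the classical cosine series of $\sin(\pi x)$ on $(0,1)$. Seek
\[
\chi_r=c_0(y)+\sum_{k\ge1}c_k(y)\cos(2k\pi x),
\]
which automatically satisfies the Neumann condition at $x=0,1$.

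For the mode $k=0$, we need $c_0''+\pi^2c_0=0$, $c_0'(0)=0$ and $c_0'(r)=-2\sqrt2 r$. This gives $c_0=A\cos(\pi y)$ with $A=2\sqrt2r/(\pi\sin\pi r)$, which is legitimate because $0<\pi r<\pi/2$. This is the only place where $r<1/2$ is used.

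For $k\ge1$, the frequency is $2k\pi$, so we need $c_k''=\pi^2(4k^2-1)c_k$, $c_k'(0)=0$ and $c_k'(r)=4\sqrt2r/(4k^2-1)$. This gives $c_k=B_k\cosh(\pi\sqrt{4k^2-1}\,y)$, and in particular
\[
c_k(r)=\frac{4\sqrt2\,r\coth t_k}{\pi(4k^2-1)^{3/2}},\qquad t_k=\pi r\sqrt{4k^2-1}.
\]

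\textbf{Regularity and the weak identity.} Since $\cosh$ is increasing, $|c_k(y)|\le|c_k(r)|=O(k^{-3})$ and $|c_k'(y)|\le|c_k'(r)|=O(k^{-2})$. So the series for $\chi_r$, $\partial_x\chi_r$ (which carries an extra factor $2k\pi$) and $\partial_y\chi_r$ all converge uniformly on $\overline{\mathcal D_r}$, giving $\chi_r\in C^1(\overline{\mathcal D_r})$.

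The weak identity \eqref{eq:app-correction-weak} is then verified mode by mode. Expand $\eta$ in $x$ in the same cosine basis and integrate by parts in $y$ on each mode using the boundary values of $c_k'$. Alternatively, verify it for smooth $\eta$ on truncations and pass to the limit by density in $H^1(\mathcal D_r)$.

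I expect this justification of termwise operations, together with the $C^1$ bound, to be the main technical point. The decay estimates above should settle it.

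\textbf{The identities.} Taking $\eta\equiv1$ in \eqref{eq:app-correction-weak} gives
\[
-\pi^2\int_{\mathcal D_r}\chi_r=r\bigl(\Psi(1)-\Psi(0)\bigr)=-2\sqrt2r,
\]
so $\int_{\mathcal D_r}\chi_r=2\sqrt2r/\pi^2$. A direct integration gives $\int_0^1x\Psi\,dx=-2\sqrt2/\pi^2$, so $-\int_{\mathcal D_r}x\Psi$ equals the same value.

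Taking $\eta=\chi_r$ gives
\[
\int_{\mathcal D_r}\bigl(|\nabla\chi_r|^2-\pi^2\chi_r^2\bigr)=r\int_0^1\Psi'(x)\chi_r(x,r)\,dx.
\]
By orthogonality of the cosines, this equals
\[
r\Bigl\{-2\sqrt2\,c_0(r)+\sum_{k\ge1}\frac{2\sqrt2\,c_k(r)}{4k^2-1}\Bigr\}.
\]
Insert $c_0(r)=2\sqrt2r\cot(\pi r)/\pi$ and the formula for $c_k(r)$ above. The bracket becomes
\[
-\frac{8r}{\pi}\Bigl(\cot(\pi r)-2\sum_{k\ge1}\frac{\coth t_k}{(4k^2-1)^{5/2}}\Bigr)=-\mathcal C(r),
\]
which is the second identity in \eqref{eq:app-correction-identities}.
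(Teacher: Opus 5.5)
Your proposal is correct and follows the paper's proof essentially step for step. You use the same separation-of-variables series for $\chi_r$ (the $\cos(\pi y)/\sin(\pi r)$ mode plus the $\cosh$ modes), the same coefficient decay bounds to get $C^1(\overline{\mathcal D_r})$, and the same test functions $\eta\equiv1$ and $\eta=\chi_r$ together with cosine orthogonality to obtain both identities in \eqref{eq:app-correction-identities}.
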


\begin{proof}
	On $(0,1)$, the function $\Psi'(x)=-\sqrt2\,\pi\sin(\pi x)$ has the
	cosine expansion
	\[
	\Psi'(x)=-2\sqrt2
	+4\sqrt2\sum_{k=1}^{\infty}
	\frac{\cos(2k\pi x)}{4k^2-1},
	\]
	which converges absolutely and uniformly on $[0,1]$. Set
	\[
	\omega_k=\pi\sqrt{4k^2-1},\qquad k\ge1,
	\]
	and define
	\begin{equation}\label{eq:app-correction-series}
		\chi_r(x,y)
		=\frac{2\sqrt2\,r\cos(\pi y)}{\pi\sin(\pi r)}
		+4\sqrt2\,r\sum_{k=1}^{\infty}
		\frac{\cos(2k\pi x)\cosh(\omega_k y)}
		{(4k^2-1)\omega_k\sinh(\omega_k r)}.
	\end{equation}
	Since the $k$-th term of the
	series in \eqref{eq:app-correction-series} and its first derivatives are
	bounded on $\overline{\mathcal D_r}$ by $C_rk^{-2}$, we have
	$\chi_r\in C^1(\overline{\mathcal D_r})$. Note that every term in
	\eqref{eq:app-correction-series} satisfies $-\Delta v=\pi^2v$. Thus,
	\[
	-\Delta\chi_r=\pi^2\chi_r\quad\text{in }\mathcal D_r.
	\]
	Differentiating termwise, we see that $\partial_\nu\chi_r=0$ on the
	bottom and vertical sides of $\mathcal D_r$. On the top side, the
	cosine expansion of $\Psi'$ gives
	\[
	\partial_\nu\chi_r(x,r)=\partial_y\chi_r(x,r)=r\Psi'(x).
	\]
	Integrating by parts, we obtain \eqref{eq:app-correction-weak}.
	
	Taking $\eta=1$ in \eqref{eq:app-correction-weak}, we get
	\[
	-\pi^2\int_{\mathcal D_r}\chi_r
	=r\bigl(\Psi(1)-\Psi(0)\bigr)
	=-2\sqrt2\,r.
	\]
	On the other hand, a direct computation gives
	\[
	\int_{\mathcal D_r}x\Psi
	=\sqrt2\,r\int_0^1x\cos(\pi x)\,dx
	=-\frac{2\sqrt2\,r}{\pi^2}.
	\]
	This proves the first identity in \eqref{eq:app-correction-identities}.
	Taking $\eta=\chi_r$ in \eqref{eq:app-correction-weak}, we get
	\[
	\int_{\mathcal D_r}\bigl(|\nabla\chi_r|^2-\pi^2\chi_r^2\bigr)
	=r\int_0^1\Psi'(x)\chi_r(x,r)\,dx.
	\]
	By the orthogonality of the functions $1$ and $\cos(2k\pi x)$
	in $L^2(0,1)$, we have
	\[
	\begin{aligned}
		\int_0^1\Psi'(x)\chi_r(x,r)\,dx
		&=-\frac{8r}{\pi}\cot(\pi r)
		+\frac{16r}{\pi}\sum_{k=1}^{\infty}
		\frac{\coth(\omega_k r)}{(4k^2-1)^{5/2}}\\
		&=-\mathcal C(r),
	\end{aligned}
	\]
	which yields the second identity.
\end{proof}

\begin{proof}[Proof of Proposition~\ref{prop:app-trapezoids}]
	We first show that $R_r$ is a minimum-area enclosing rectangle of
	$\Omega_{r,s}$. Since $r(1-sx)\ge r(1-x)$ for $0\le x\le1$, the closure
	of $\Omega_{r,s}$ contains the triangle with vertices
	\[
	(0,0),\qquad (1,0),\qquad (0,r),
	\]
	whose area is $r/2$. Note that a triangle contained in a rectangle has at most
	half the area of the rectangle. Hence, every rectangle containing
	$\overline{\Omega_{r,s}}$ has area at least $r$, which implies $R_r$ is a minimum-area enclosing rectangle. 
	Moreover, each point of
	$R_r$ lies within vertical distance $rsx\le rs$ of
	$\overline{\Omega_{r,s}}$, which gives the Hausdorff estimate.
	
	We now turn to the eigenvalue. Since $r>r_*$,
	Lemma~\ref{lem:app-critical-coefficient} gives $\mathcal C(r)<0$. Let
	$\chi_r$ be the function from Lemma~\ref{lem:app-correction}, and
	consider
	\[
	U_s(x,y)=\Psi(x)-s\chi_r(x,y).
	\]
	Here and below, $O_r(s^k)$ denotes a quantity whose absolute value is
	bounded by $C_rs^k$ for $0<s<1$.
	
	Since $-\Delta\Psi=\pi^2\Psi$ and $\Psi'(0)=\Psi'(1)=0$, it follows that for every $\eta\in H^1(\Omega_{r,s})$,
	\begin{equation}\label{eq:app-Psi-weak}
		\int_{\Omega_{r,s}}
		\bigl(\nabla\Psi\cdot\nabla\eta-\pi^2\Psi\eta\bigr)
		=rs\int_0^1\Psi'(x)\eta(x,r(1-sx))\,dx.
	\end{equation}
	For $\eta=\Psi$, \eqref{eq:app-Psi-weak} gives
	\[
	\int_{\Omega_{r,s}}
	\bigl(|\nabla\Psi|^2-\pi^2\Psi^2\bigr)
	=0.
	\]
	Expanding the quadratic form and applying \eqref{eq:app-Psi-weak} with
	$\eta=\chi_r$, we obtain
	\[
	\int_{\Omega_{r,s}}\bigl(|\nabla U_s|^2-\pi^2U_s^2\bigr)
	=s^2\int_{\Omega_{r,s}}
	\bigl(|\nabla\chi_r|^2-\pi^2\chi_r^2\bigr)
	-2rs^2\int_0^1\Psi'(x)\chi_r(x,r(1-sx))\,dx.
	\]
	Since $\chi_r$ and its first derivatives are bounded on
	$\overline{\mathcal D_r}$ and $|\mathcal D_r\setminus\Omega_{r,s}|=rs/2$,
	replacing $\Omega_{r,s}$ by $\mathcal D_r$ in the first integral and
	$r(1-sx)$ by $r$ in the second changes the right-hand side only by
	$O_r(s^3)$. Moreover, \eqref{eq:app-correction-weak} with
	$\eta=\chi_r$ gives
	\[
	r\int_0^1\Psi'(x)\chi_r(x,r)\,dx
	=\int_{\mathcal D_r}\bigl(|\nabla\chi_r|^2-\pi^2\chi_r^2\bigr).
	\]
	Therefore, by \eqref{eq:app-correction-identities},
	\begin{equation}\label{eq:app-energy}
		\begin{aligned}
			\int_{\Omega_{r,s}}\bigl(|\nabla U_s|^2-\pi^2U_s^2\bigr)
			&=-s^2\int_{\mathcal D_r}
			\bigl(|\nabla\chi_r|^2-\pi^2\chi_r^2\bigr)+O_r(s^3)\\
			&=r\mathcal C(r)s^2+O_r(s^3).
		\end{aligned}
	\end{equation}
	
	Since $\int_0^1\Psi=0$, the first identity in
	\eqref{eq:app-correction-identities} gives
	\[
	\int_{\Omega_{r,s}}\Psi
	=-s\int_{\mathcal D_r}x\Psi
	=s\int_{\mathcal D_r}\chi_r.
	\]
	Consequently,
	\[
	\int_{\Omega_{r,s}}U_s
	=s\int_{\mathcal D_r\setminus\Omega_{r,s}}\chi_r
	=O_r(s^2).
	\]
	Let
	\[
	\widehat U_s
	=U_s-\frac1{|\Omega_{r,s}|}\int_{\Omega_{r,s}}U_s
	\]
	be the mean-zero part of $U_s$. Then $\nabla\widehat U_s=\nabla U_s$
	and
	\[
	\int_{\Omega_{r,s}}\widehat U_s^2
	=\int_{\Omega_{r,s}}U_s^2
	-\frac1{|\Omega_{r,s}|}\left(\int_{\Omega_{r,s}}U_s\right)^2
	=\int_{\Omega_{r,s}}U_s^2
	+O_r(s^4).
	\]
	Hence, \eqref{eq:app-energy} gives
	\[
	\int_{\Omega_{r,s}}|\nabla\widehat U_s|^2
	-\pi^2\int_{\Omega_{r,s}}\widehat U_s^2
	=r\mathcal C(r)s^2+O_r(s^3),
	\]
	which is negative for all sufficiently small $s>0$, since
	$\mathcal C(r)<0$. On the other hand, as $s\to0$,
	\[
	\int_{\Omega_{r,s}}\widehat U_s^2
	\longrightarrow\int_{\mathcal D_r}\Psi^2=r>0.
	\]
	Therefore, for all sufficiently small $s>0$, the function $\widehat U_s$ is
	nonzero, has mean zero on $\Omega_{r,s}$, and has Rayleigh quotient
	less than $\pi^2$. The variational characterization
	\eqref{eq:variational} then gives $\mu_1(\Omega_{r,s})<\pi^2$.
\end{proof}

%% file: ref.bib
@article {FK25,
    AUTHOR = {Freitas, Pedro and Kennedy, James B.},
     TITLE = {On domain monotonicity of {N}eumann eigenvalues of convex
              domains},
   JOURNAL = {Proc. Amer. Math. Soc.},
  FJOURNAL = {Proceedings of the American Mathematical Society},
    VOLUME = {153},
      YEAR = {2025},
    NUMBER = {12},
     PAGES = {5315--5328},
      ISSN = {0002-9939,1088-6826},
   MRCLASS = {35P15 (35J05 35J25)},
  MRNUMBER = {4989645},
MRREVIEWER = {Paolo\ Luzzini},
       DOI = {10.1090/proc/17394},
}

@article {PW60,
    AUTHOR = {Payne, L. E. and Weinberger, H. F.},
     TITLE = {An optimal {P}oincar\'e{} inequality for convex domains},
   JOURNAL = {Arch. Rational Mech. Anal.},
  FJOURNAL = {Archive for Rational Mechanics and Analysis},
    VOLUME = {5},
      YEAR = {1960},
     PAGES = {286--292},
      ISSN = {0003-9527},
   MRCLASS = {35.00},
  MRNUMBER = {117419},
MRREVIEWER = {I.\ Stakgold},
       DOI = {10.1007/BF00252910},
}

@misc {WX24,
  title={The first {N}eumann eigenvalue and the width},
  author={Wang, Haibin and Xu, Guoyi},
  note={arXiv preprint arXiv:2407.13984},
  year={2024}
}

@article {Szego54,
    AUTHOR = {Szeg\"o, G.},
     TITLE = {Inequalities for certain eigenvalues of a membrane of given area},
   JOURNAL = {J. Rational Mech. Anal.},
  FJOURNAL = {Journal of Rational Mechanics and Analysis},
    VOLUME = {3},
      YEAR = {1954},
     PAGES = {343--356},
      ISSN = {1943-5282,1943-5290},
   MRCLASS = {36.0X},
  MRNUMBER = {61749},
MRREVIEWER = {E.\ T.\ Copson},
       DOI = {10.1512/iumj.1954.3.53017},
}

@article {Weinberger56,
    AUTHOR = {Weinberger, H. F.},
     TITLE = {An isoperimetric inequality for the {$N$}-dimensional free
              membrane problem},
   JOURNAL = {J. Rational Mech. Anal.},
  FJOURNAL = {Journal of Rational Mechanics and Analysis},
    VOLUME = {5},
      YEAR = {1956},
     PAGES = {633--636},
      ISSN = {1943-5282,1943-5290},
   MRCLASS = {52.0X},
  MRNUMBER = {79286},
       DOI = {10.1512/iumj.1956.5.55021},
}

@misc{ABF24,
  title={The geometric size of the fundamental gap},
  author={Amato, Vincenzo and Bucur, Dorin and Fragal{\`a}, Ilaria},
  note={arXiv preprint arXiv:2407.01341},
  year={2024}
}

@misc {ABF26,
    AUTHOR = {Amato, Vincenzo and Bucur, Dorin and Fragal\`a, Ilaria},
     TITLE = {A sharp quantitative nonlinear {P}oincar\'e inequality on
              convex domains},
   JOURNAL = {Ann. Inst. H. Poincar\'e C Anal. Non Lin\'eaire},
HOWPUBLISHED = {Ann. Inst. H. Poincar\'e C Anal. Non Lin\'eaire},
  FJOURNAL = {Annales de l'Institut Henri Poincar\'e C. Analyse Non Lin\'eaire},
      YEAR = {2026},
      NOTE = {Published online first 24 June 2026,
              doi:10.4171/AIHPC/185},
       DOI = {10.4171/AIHPC/185},
}

@article {CFHLLS25,
    AUTHOR = {Cavallina, Lorenzo and Funano, Kei and Henrot, Antoine and
              Lemenant, Antoine and Lucardesi, Ilaria and Sakaguchi, Shigeru},
     TITLE = {Two extremum problems for {N}eumann eigenvalues},
   JOURNAL = {J. Anal. Math.},
  FJOURNAL = {Journal d'Analyse Math\'ematique},
    VOLUME = {155},
      YEAR = {2025},
    NUMBER = {2},
     PAGES = {657--697},
      ISSN = {0021-7670,1565-8538},
   MRCLASS = {49R05 (31B10 35J05 35P15 49Q10)},
  MRNUMBER = {4905324},
MRREVIEWER = {Marcos\ Montenegro},
       DOI = {10.1007/s11854-025-0366-3},
}

@article {Kroger99,
    AUTHOR = {Kr{\"o}ger, Pawel},
     TITLE = {On upper bounds for high order {N}eumann eigenvalues of convex
              domains in {E}uclidean space},
   JOURNAL = {Proc. Amer. Math. Soc.},
  FJOURNAL = {Proceedings of the American Mathematical Society},
    VOLUME = {127},
      YEAR = {1999},
    NUMBER = {6},
     PAGES = {1665--1669},
      ISSN = {0002-9939,1088-6826},
   MRCLASS = {35P15 (34L15 35J25)},
  MRNUMBER = {1486739},
MRREVIEWER = {Julian\ Edward},
       DOI = {10.1090/S0002-9939-99-04804-2},
}

@article {Ni2013,
    AUTHOR = {Ni, Lei},
     TITLE = {Estimates on the modulus of expansion for vector fields
              solving nonlinear equations},
   JOURNAL = {J. Math. Pures Appl. (9)},
  FJOURNAL = {Journal de Math\'{e}matiques Pures et Appliqu\'{e}es.
              Neuvi\`eme S\'{e}rie},
    VOLUME = {99},
      YEAR = {2013},
    NUMBER = {1},
     PAGES = {1--16},
      ISSN = {0021-7824,1776-3371},
   MRCLASS = {53C21 (35B50 35P15)},
  MRNUMBER = {3003280},
MRREVIEWER = {Qiaoling\ Xia},
       DOI = {10.1016/j.matpur.2012.05.009},
       URL = {https://doi.org/10.1016/j.matpur.2012.05.009},
}

@inproceedings{LY80,
  author = {Li, Peter and Yau, Shing Tung},
  title = {Estimates of eigenvalues of a compact {R}iemannian manifold},
  booktitle = {Geometry of the {L}aplace operator ({P}roc. {S}ympos. {P}ure {M}ath., {U}niv. {H}awaii, {H}onolulu, {H}awaii, 1979)},
  series = {Proc. Sympos. Pure Math., XXXVI},
  pages = {205--239},
  publisher = {Amer. Math. Soc., Providence, R.I.},
  year = {1980},
  mrclass = {58G25 (53C20)},
  mrnumber = {573435},
  mrreviewer = {P. G\"{u}nther},
}

@article{ZY84,
  author = {Zhong, Jia Qing and Yang, Hong Cang},
  title = {On the estimate of the first eigenvalue of a compact {R}iemannian manifold},
  journal = {Sci. Sinica Ser. A},
  fjournal = {Scientia Sinica. Series A. Mathematical, Physical, Astronomical \& Technical Sciences},
  volume = {27},
  year = {1984},
  number = {12},
  pages = {1265--1273},
  issn = {0253-5831},
  mrclass = {58G25},
  mrnumber = {794292},
  mrreviewer = {Domenico Perrone},
}

@article{AC13,
  author = {Andrews, Ben and Clutterbuck, Julie},
  title = {Sharp modulus of continuity for parabolic equations on manifolds and lower bounds for the first eigenvalue},
  journal = {Anal. PDE},
  fjournal = {Analysis \& PDE},
  volume = {6},
  year = {2013},
  number = {5},
  pages = {1013--1024},
  issn = {2157-5045},
  mrclass = {35R01 (35B65 35K59 35P15)},
  mrnumber = {3125548},
  mrreviewer = {Rodica Luca},
  doi = {10.2140/apde.2013.6.1013},
  url = {http://dx.doi.org/10.2140/apde.2013.6.1013},
}

@article{BQ00,
  author = {Bakry, Dominique and Qian, Zhongmin},
  title = {Some new results on eigenvectors via dimension, diameter, and {R}icci curvature},
  journal = {Adv. Math.},
  fjournal = {Advances in Mathematics},
  volume = {155},
  year = {2000},
  number = {1},
  pages = {98--153},
  issn = {0001-8708},
  mrclass = {58J50 (53C21)},
  mrnumber = {1789850},
  mrreviewer = {Olivier Druet},
  doi = {10.1006/aima.2000.1932},
  url = {https://doi.org/10.1006/aima.2000.1932},
}

@article {LW21JGA,
    AUTHOR = {Li, Xiaolong and Wang, Kui},
     TITLE = {Sharp lower bound for the first eigenvalue of the weighted
              {$p$}-{L}aplacian {I}},
   JOURNAL = {J. Geom. Anal.},
  FJOURNAL = {Journal of Geometric Analysis},
    VOLUME = {31},
      YEAR = {2021},
    NUMBER = {8},
     PAGES = {8686--8708},
      ISSN = {1050-6926,1559-002X},
   MRCLASS = {58J50 (35P15 35P30)},
  MRNUMBER = {4293945},
MRREVIEWER = {He-Jun\ Sun},
       DOI = {10.1007/s12220-021-00613-4},
       URL = {https://doi.org/10.1007/s12220-021-00613-4},
}

@article {LW21MRL,
    AUTHOR = {Li, Xiaolong and Wang, Kui},
     TITLE = {Sharp lower bound for the first eigenvalue of the weighted
              {$p$}-{L}aplacian {II}},
   JOURNAL = {Math. Res. Lett.},
  FJOURNAL = {Mathematical Research Letters},
    VOLUME = {28},
      YEAR = {2021},
    NUMBER = {5},
     PAGES = {1459--1479},
      ISSN = {1073-2780,1945-001X},
   MRCLASS = {58J50 (35P15)},
  MRNUMBER = {4471716},
MRREVIEWER = {He-Jun\ Sun},
       DOI = {10.4310/mrl.2021.v28.n5.a8},
       URL = {https://doi.org/10.4310/mrl.2021.v28.n5.a8},
}

@article {LW21TAMS,
    AUTHOR = {Li, Xiaolong and Wang, Kui},
     TITLE = {Lower bounds for the first eigenvalue of the {L}aplacian on
              {K}\"{a}hler manifolds},
   JOURNAL = {Trans. Amer. Math. Soc.},
  FJOURNAL = {Transactions of the American Mathematical Society},
    VOLUME = {374},
      YEAR = {2021},
    NUMBER = {11},
     PAGES = {8081--8099},
      ISSN = {0002-9947},
   MRCLASS = {35P15 (53C55)},
  MRNUMBER = {4328692},
       DOI = {10.1090/tran/8434},
       URL = {https://doi.org/10.1090/tran/8434},
}

@article{AN12,
  author = {Andrews, Ben and Ni, Lei},
  title = {Eigenvalue comparison on {B}akry-{E}mery manifolds},
  journal = {Comm. Partial Differential Equations},
  fjournal = {Communications in Partial Differential Equations},
  volume = {37},
  year = {2012},
  number = {11},
  pages = {2081--2092},
  issn = {0360-5302},
  coden = {CPDIDZ},
  mrclass = {35K05 (35B51 35J10 35P15)},
  mrnumber = {3005536},
  doi = {10.1080/03605302.2012.668602},
  url = {http://dx.doi.org/10.1080/03605302.2012.668602},
}

@article {ZW2017,
    AUTHOR = {Zhang, Yuntao and Wang, Kui},
     TITLE = {An alternative proof of lower bounds for the first eigenvalue
              on manifolds},
   JOURNAL = {Math. Nachr.},
  FJOURNAL = {Mathematische Nachrichten},
    VOLUME = {290},
      YEAR = {2017},
    NUMBER = {16},
     PAGES = {2708--2713},
      ISSN = {0025-584X,1522-2616},
   MRCLASS = {58C40 (35B50 35J05 35J20 35P15)},
  MRNUMBER = {3722506},
MRREVIEWER = {Henrik\ Matthiesen},
       DOI = {10.1002/mana.201600388},
       URL = {https://doi.org/10.1002/mana.201600388},
}
